\documentclass{amsproc}
\usepackage[dvipsnames, svgnames]{xcolor}
\usepackage{amssymb, enumitem, tikz-cd, multirow, nccmath}

\title{Gluing genus 1 and genus 2 curves along \texorpdfstring{$\ell$}{ℓ}-torsion}

\author{Pitchayut Saengrungkongka}
\address{
  Department of Mathematics,
  Massachusetts Institute of Technology,
  Cambridge,
  MA 02139,
  USA
}
\email{psaeng@mit.edu}
\author{Noah Walsh}
\address{
  Department of Mathematics,
  Massachusetts Institute of Technology,
  Cambridge,
  MA 02139,
  USA
}
\email{nwalsh21@mit.edu}
\subjclass[2020]{11G30, 14H25, 14H40}

\usepackage[normalem]{ulem}

%\usepackage[backend=biber, style=alphabetic, sorting=nyt]{biblatex}

% Edgar: I am sure there is a nicer way to do this, you guys canf figure it out
\usepackage[style=alphabetic, backend=biber, sorting=nyt, doi=false, isbn=false, url=false, hyperref,backref,backrefstyle=none, maxnames=20, maxalphanames=20]{biblatex}
%% add urls
\newbibmacro{string+doiurlisbn}[1]{%
  \iffieldundef{doi}{%
    \iffieldundef{url}{%
      \iffieldundef{isbn}{%
        \iffieldundef{issn}{%
          #1%
        }{%
          \href{https://books.google.com/books?vid=ISSN\thefield{issn}}{#1}%
        }%
      }{%
        \href{https://books.google.com/books?vid=ISBN\thefield{isbn}}{#1}%
      }%
    }{%
      \href{\thefield{url}}{#1}%
    }%
  }{%
    \href{https://dx.doi.org/\thefield{doi}}{#1}%
  }%
}
\DeclareFieldFormat{title}{\usebibmacro{string+doiurlisbn}{\mkbibemph{#1}}}
\DeclareFieldFormat[article,incollection,inproceedings,unpublished,misc,book]{title}%
    {\usebibmacro{string+doiurlisbn}{\mkbibquote{#1}}}
\DefineBibliographyStrings{english}{%
  backrefpage = {$\uparrow$},%
  backrefpages = {$\uparrow$}%
}

\usepackage{xurl}
\addbibresource{references.bib}

\usepackage[pdfusetitle, pdfencoding=auto, psdextra, breaklinks=true]{hyperref}
\usepackage{xcolor}
\usepackage{color}
\definecolor{mylinkcolor}{rgb}{0.5,0.0,0.0}
\definecolor{myurlcolor}{rgb}{0.0,0.0,0.7}
\hypersetup{
 colorlinks,
 urlcolor=myurlcolor,
 citecolor=myurlcolor,
 linkcolor=mylinkcolor,
 breaklinks=true
}

\usepackage[nameinlink]{cleveref}

\numberwithin{equation}{section}
\newtheorem{theorem}[equation]{Theorem}
\newtheorem{lemma}[equation]{Lemma}
\newtheorem{proposition}[equation]{Proposition}
\newtheorem{corollary}[equation]{Corollary}
\theoremstyle{definition}
\newtheorem{algorithm}[equation]{Algorithm}
\newtheorem{definition}[equation]{Definition}

\newtheorem{example}[equation]{Example}

\theoremstyle{remark}
\newtheorem{remark}[equation]{Remark}

\newcommand{\vocab}[1]{\textbf{\textit{\boldmath #1}}}
\DeclareMathOperator{\Jac}{Jac}
\DeclareMathOperator{\GL}{GL}
\DeclareMathOperator{\SL}{SL}
\DeclareMathOperator{\GSp}{GSp}
\DeclareMathOperator{\Frob}{Frob}
\DeclareMathOperator{\Gal}{Gal}
\DeclareMathOperator{\lcm}{lcm}
\DeclareMathOperator{\Res}{Res}
\DeclareMathOperator{\End}{End}
\DeclareMathOperator{\Sym}{Sym}
\newcommand{\eps}{\varepsilon}
\newcommand{\ZZ}{\mathbb Z}
\newcommand{\QQ}{\mathbb Q}
\newcommand{\CC}{\mathbb C}
\newcommand{\FF}{\mathbb F}
\newcommand{\Qbar}{\overline{\QQ}}
\newcommand{\GalQ}{\Gal(\Qbar/\QQ)}
\newcommand{\ttwomatrix}[4]{\left(\begin{smallmatrix} #1 & #2 \\ #3 & #4\end{smallmatrix}\right)}

\newcommand{\ttwovector}[2]{\left(\begin{smallmatrix} #1 \\ #2\end{smallmatrix}\right)}

\newcommand{\rhoHperpH}{\overline\rho_{H^\perp\!/H}}

\usepackage[color=white, linecolor=red, bordercolor=red]{todonotes}

\begin{document}
\begin{abstract}
    Let $Y$ be a genus $2$ curve over $\QQ$.
    We provide a method to systematically search for possible candidates of a prime $\ell\geq 3$ and a genus $1$ curve $X$ for which there exists a genus $3$ curve $Z$ over $\QQ$ whose Jacobian is, up to quadratic twist, $(\ell, \ell, \ell)$-isogenous to the product of Jacobians of $X$ and $Y$, building on the work by Hanselman, Schiavone, and Sijsling for $\ell=2$. We find several such pairs $(X,Y)$ for prime $\ell$ up to $13$. We also improve their numerical gluing algorithm, allowing us to successfully glue genus $1$ and genus $2$ curves along their $13$-torsion.
\end{abstract}
\maketitle
\section{Introduction}
The study of curves is a central topic in arithmetic geometry. Exhaustive lists of curves have long been a useful tool in number theory, starting with the Antwerp tables that contain elliptic curves up to conductor $200$ \cite{antwerp}, continuing with Cremona's tables of elliptic curves \cite{cremona_table}, and persisting today with the collections of elliptic curves and genus 2 curves in the LMFDB \cite{lmfdb}.

A large number of higher genus curves can be constructed by gluing curves of smaller genera. For any two curves $X$ and $Y$ of genera $g_X$ and $g_Y$, the process of \vocab{gluing} produces a curve $Z$ of genus $g_X+g_Y$ such that the Jacobian of $Z$, denoted $\Jac(Z)$, is isogenous to the product $\Jac(X)\times \Jac(Y)$. In other words, $\Jac(Z)$ is a quotient of $\Jac(X)\times\Jac(Y)$ by a finite subgroup $G$. If $G$ is a subgroup of the product of the $n$-torsion subgroups $\Jac(X)[n]$ and $\Jac(Y)[n]$ for a positive integer $n$, we say that this gluing is \vocab{along $n$-torsion}.

Given generic curves $X$ and $Y$, there are many possible choices of subgroups $G$ that give rise to gluings over $\CC$,
and one can construct such gluings analytically by considering $\Jac(X)$ and $\Jac(Y)$ as complex tori. However, for arithmetic applications, we are interested in curves defined over non-algebraically closed fields such as $\QQ$. If we take curves $X$ and $Y$ randomly, then it is very likely there will be no gluing $Z$ that is defined over $\QQ$. We are interested in systematically producing pairs of curves that admit a gluing over $\QQ$.
These curves often have interesting properties that deviate from generic behavior, such as a special torsion structure, a nontrivial endomorphism ring, or an interesting Sato-Tate group.

\subsection{Previous Work}
The simpler case of gluing two curves of genus $1$ (i.e., $1+1=2$) along $n$-torsion has been studied in many aspects.
\begin{enumerate}[label=(\roman*)]
\item \textbf{Criteria for gluings to exist.} Frey and Kani \cite{frey_kani}  derive the necessary conditions for two curves to admit a gluing along $n$-torsion, namely, if $E_1$ and $E_2$ are gluable, then there exists an antisymplectic, $\GalQ$-equivariant isomorphism $E_1[n]\to E_2[n]$.
\item \textbf{Find all gluable pairs.} Given a list of elliptic curves (e.g., all elliptic curves in the L-functions and modular forms database, LMFDB \cite{lmfdb}),
we are interested in systematically searching for all pairs that are gluable.
When $n=\ell$ is prime, Cremona and Freitas \cite{global_symplectic_type} gave a complete algorithm for detecting all such antisymplectic isomorphisms and applied their methods to all pairs of elliptic curves in the LMFDB. 
\item \textbf{Parametrize all gluable pairs.} Building on (ii),
we ask if one can parameterize all pairs of elliptic curves that are gluable. For a fixed elliptic curve $E_1$, the space of all elliptic curves $E_2$ that are gluable is one-dimensional and is a twist of the modular curve $X(n)$. For $n\in\{2,3,4,5\}$, the space is isomorphic to $\mathbb P^1$. The details have been worked out by Rubin and Silverberg in \cite{ec_gluable_family_2, ec_gluable_family_3_5, ec_gluable_family_4}. 
Fisher \cite{ec_gluable_family_7_11} derives explicit formulas for $n\in\{7,11\}$, but in that case there are finitely many $E_2$'s for a fixed $E_1$.
There are also efforts to parametrize \emph{pairs} of gluable elliptic curve along $n$-torsion for higher $n$: Fisher \cite[Cor.~1.3]{ec_gluable_pairs} shows that there are infinitely many pairs of gluable curves for all $n\leq 10$.
\item \textbf{Computing gluing.} One way method of computing gluings over $\QQ$ is via analytic techniques over $\CC$. In simpler cases, we also have explicit formulas. Howe, Lepr\'evost, and Poonen \cite[Prop.~4]{ec_2_torsion_gluing} gave an explicit formula for $n=2$, and Br\"oker et. al. \cite[\S A.1]{ec_3_torsion_gluing} gave a formula for $n=3$. As $\ell$ grows larger, the explicit formulas quickly become unwieldy.
\end{enumerate}

Genus $1$ plus genus $2$ gluing (i.e., $1+2=3$) has not been studied as widely. Ritzenthaler and Romagny \cite{reverse_1_plus_2} gave a formula for recovering the equation of the genus $2$ factor of a genus $1$ plus genus $2$ gluing along $2$-torsion, provided that the genus $1$ factor is known. Then Hanselman, Schiavone, and Sijsling \cite{1_plus_2_2_torsion_gluing} gave a comprehensive algorithm and explicit formula for gluing along 2-torsion.  
They answer some of the questions above for gluing genus $1$ and $2$ curves as follows.
\begin{itemize}
    \item To answer (i), they \cite[\S 1]{1_plus_2_2_torsion_gluing} provide a concrete criterion for a gluing along $\ell$-torsion to exist, which we recall in \Cref{sec:glue_condition}.
    \item To answer (iv), they \cite[\S 2.1]{1_plus_2_2_torsion_gluing} outline the analytic algorithm to compute the gluing along $\ell$-torsion, which was implemented in \cite{gluing_code}. In the case of $\ell=2$, they also provide an explicit formula to glue genus $1$ and $2$ curves along $2$-torsion.
    \item To answer (iii), if $\ell=2$, for a genus two curve $Y$, Hanselman \cite[\S 2.2]{HanselmanThesis} constructs an infinite family of genus $3$ curves resulting from gluing $Y$ with an elliptic curve along their $2$-torsion.
    
    More generally, for a fixed genus $2$ curve $Y$ such that there exists an elliptic curve $X$ that admits gluing with $Y$ along $\ell$-torsion, one can reduce the problem of finding all gluable elliptic curves $X'$ to finding elliptic curves $X'$ whose $\ell$-torsion is symplectically isomorphic (as a $\GalQ$-module) to the $\ell$-torsion of $X$. See \Cref{subsec:determine_X} for more details.
\end{itemize}

Our work attempts to answer (ii) and improves existing methods in (iv).

\subsection{Our Results}
This paper considers the problem of gluing two curves of genus $1$ and genus $2$ along their $\ell$-torsion to obtain a curve of genus $3$ for a prime $\ell$. We focus on the case of $\ell\geq 3$ that has not been as widely studied. There are two key results.

The first key result is that, given a curve $Y$ of genus $2$, we demonstrate how to systematically and efficiently search for genus $1$ curves $X$ in the LMFDB and corresponding primes $\ell$ such that $X$ and $Y$ are gluable along $\ell$-torsion.
More specifically, given a genus $2$ curve $Y$, we first eliminate all but finitely many $\ell$'s. Then for each such $\ell$, we search for elliptic curves $X$ such that there exists a $\GalQ$-stable subgroup $G\subset\Jac(X)[\ell]\times \Jac(Y)[\ell]$ such that the quotient
\begin{equation}
    Q = \frac{\Jac(X)\times \Jac(Y)}{G}
\end{equation}
is a principally-polarized abelian variety. The search process culminates in the workflow described in \Cref{sec:gluing_algorithm}.
We also provide an algorithm to rigorously verify the existence of such $G$ in the generic case, as detailed in \Cref{alg:proving_isomorphism,alg:symplectic}.
If $Q$ is isomorphic to a Jacobian of some genus $3$ curve $Z$  (which happens generically, when $Q$ is not a product of two or more Jacobians), then curves $X$ and $Y$ produce a gluing $Z$. We use this result to glue a large number of curves in the LMFDB and discover curves with interesting geometric endomorphism rings in \Cref{subsec:endomorphism}.

The second key result is an improvement of the numerical gluing algorithm in \cite[\S 2.1]{1_plus_2_2_torsion_gluing} to obtain a more efficient \Cref{alg:fast_gluing}.
With this algorithm, we computing a gluing along $13$-torsion in $24$ minutes in \Cref{ex:13_gluing}.

\subsection{Organization of the Article}

In \Cref{sec:glue_condition}, we describe the condition under which two curves are gluable, following \cite{1_plus_2_2_torsion_gluing}. Then in \Cref{sec:frob_element}, we explain how to use Frobenius elements to filter pairs of gluable curves efficiently. \Cref{sec:proving_isomorphism} explains how to utilize Serre's modularity conjecture to rigorously verify (in the generic case) a part of the gluability condition. \Cref{sec:symplectic_type} adapts the symplectic test in \cite{local_symplectic_type} to verify the other part of the gluability condition. We put all the pieces together and describe our current workflow in \Cref{sec:gluing_algorithm}, which includes the speedup of the gluing algorithm.
Finally, \Cref{sec:examples} lists some examples produced from running our workflow on curves in the LMFDB.

\subsection{Notations}
For any prime $p$ and rational number $r\neq 0$, let $\nu_p(r)$ be the $p$-adic valuation of $r$, i.e., the exponent of $p$ in the prime factorization of $r$.

For any abelian variety $A$ over a field $k$ and a positive integer $n$, $A[n]$ denotes the set of $n$-torsion points over the algebraic closure $\overline k$.
For any curve $X$, let $N_X$ denote the conductor of $X$, and for any prime $p$ not dividing $N_X$, let $a_{p,X}$ denote the trace of Frobenius at $p$ of $X$.
Additionally, we define $a_{p,X}$ to be $1$, $-1$, or $0$ if $X$ is an elliptic curve with split multiplicative, non-split multiplicative, or additive reduction modulo $p$, respectively.

\subsection*{Acknowledgements}
This research was conducted through the MIT Department of Mathematics’s Summer Program for Undergraduate Research (SPUR). The authors would like to thank our mentors Edgar Costa and Sam Schiavone for their guidance throughout the program. We also thank Eran Assaf and Shiva Chidambaram for helpful discussions. We also thank Prof. David Jerison and Jonathan Bloom for organizing SPUR and for their thoughtful comments about our research. Finally, we thank anonymous referees who provide helpful comments to this paper.

\section{Gluability Conditions}
\label{sec:glue_condition}
Let $n\geq 2$ be an integer. Let $X$ and $Y$ be smooth curves of any genus over a base field $k$ with characteristic not dividing $n$. Informally, a gluing of $X$ and $Y$ is a curve $Z$ with an isomorphism $\Jac(Z) \simeq (\Jac(X) \times \Jac(Y))/G$, where $G$ is a subgroup of $(\Jac(X) \times \Jac(Y))[n]$ for some $n$. Such an isomorphism does not exist for all subgroups $G$; in what follows, we consider some necessary conditions for $(\Jac(X) \times \Jac(Y))/G$ to be isomorphic to a Jacobian and consider how they translate to conditions on $G$.

First, the Jacobian of any curve $C$ comes with a \vocab{principal polarization}. This is an isomorphism $\lambda: \Jac(C) \to \Jac(C)^\vee$, satisfying technical conditions laid out in \cite[\S 1.11]{milne_av}. We will consider more generally necessary conditions for $(A_1 \times A_2)/G$ to be a Jacobian, where $A_1$ and $A_2$ are arbitrary principally polarized abelian varieties. Let $\lambda_1: A_1 \to A_1^\vee$ and $\lambda_2: A_2 \to A_2^\vee$ be these polarizations.

For any abelian variety $A$, a polarization $\lambda: A \to A^\vee$  induces for each $n$ the \vocab{Weil Pairing}
\begin{equation}
e_n^{\lambda} \colon A[n] \times A[n] \to \mu_n,
\end{equation}
an alternating bilinear form taking values in the $n$-th roots of unity. (See \cite[\S 1.13]{milne_av} for the exact definition.)

The pairings $e_n^{\lambda_1}$ and $e_n^{\lambda_2}$ on $A_1$ and $A_2$ induce a pairing
\begin{equation}
\begin{aligned}
    e_n \colon (A_1 \times A_2)[n] \times (A_1 \times A_2)[n] &\to \mu_n\\
    ((P_1, P_2), (Q_1, Q_2)) &\mapsto e_n^{\lambda_1}(P_1, Q_1)\; e_n^{\lambda_2}(P_2, Q_2)
\end{aligned}
\end{equation}
coming from the product polarization 
$
    \lambda_1 \times \lambda_2 : A_1 \times A_2 \to A_1^\vee \times A_2^\vee \simeq (A_1 \times A_2)^\vee.
    $

We only consider cases in which the polarization on the quotient comes from the $n$-th power of this product polarization. (See \cite[Rmk. 1.1.4, Thm.~1.1.10]{HanselmanThesis} for why generically one does not gain anything by considering polarizations other than the $n$-th power of the product.) In order for this to yield a polarization on the quotient, $G$ must be \vocab{isotropic}, i.e., the pairing must vanish on $G\times G$. Furthermore, by \cite[Lem.~2.1]{computing_isogeny}, $G$ must be maximal with respect to this property in order for the polarization to be principal.

The polarization of a Jacobian is indecomposable, so it is also necessary for the polarization on the quotient to be indecomposable.
Therefore, it is also necessary that $G$ be \vocab{indecomposable}, i.e., not of the form $G_1 \times G_2$ for $G_1 \subset A_1$, $G_2 \subset A_2$, as otherwise the polarization will be a product of polarizations on $A_1/G_1$ and $A_2/G_2$.

To summarize, $G$ must be an \vocab{indecomposable maximal isotropic subgroup}. Let us now specialize to the case where $A_1 = \Jac(X)$ and $A_2 = \Jac(Y)$ for curves $X$ and $Y$ of genera $1$ and $2$, respectively. Suppose that $G$ is an indecomposable maximal isotropic subgroup. Then $(\Jac(X) \times \Jac(Y))/G$ is an abelian variety of dimension $3$ defined over some extension $k'$ of $k$. If the polarization of $(\Jac(X) \times \Jac(Y))/G$ is indecomposable, then by \cite{threefolds_are_jacobians}, $(\Jac(X) \times \Jac(Y))/G$ is isomorphic to the Jacobian of some curve $Z$, where $Z$ and the isomorphism are defined over some further extension $k''$ of $k'$. Serre proved in the appendix to \cite{lauter} that one may in fact take $k''$ to be a quadratic extension of $k'$.

We can now formally define a gluing.

\begin{definition}
    \label{def:gluing}
    An \vocab{$(n,n)$-gluing} of the curves $X$ and $Y$ over $k$ is a triple $(Z,\psi, G)$, where $Z$ is a smooth curve over $k$ and a subgroup $G\subseteq \Jac(X)[n]\times\Jac(Y)[n]$ such that along with an isomorphism of principally polarized abelian varieties
    \begin{equation}
    \psi : \frac{\Jac(X)\times \Jac(Y)}{G} \stackrel{\sim}{\longrightarrow} \Jac(Z).
    \end{equation}
\end{definition}

In the situation when $n=\ell$, a prime number, \cite[\S 1]{1_plus_2_2_torsion_gluing} gives the following description of such subgroups.
\begin{proposition} \textnormal{\cite[Prop.~1.18]{1_plus_2_2_torsion_gluing}}
    \label{prop:complex-gluability}
    A subgroup $G\subseteq \Jac(X)[\ell]\times\Jac(Y)[\ell]$ is an indecomposable maximal isotropic subgroup if and only if 
    \begin{equation}
    G = \{(x, y)\,\mid\,\phi(x) = y + H\}
    \end{equation}
    for some one-dimensional subgroup $H \subset G$ and some antisymplectic isomorphism $\phi: \Jac(X)[\ell] \to H^\perp/H$.
\end{proposition}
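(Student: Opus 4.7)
The plan is to work throughout with Weil pairings as symplectic forms on the $\FF_\ell$-vector spaces $V_1 := \Jac(X)[\ell]$ (dimension $2$) and $V_2 := \Jac(Y)[\ell]$ (dimension $4$). In these terms, maximal isotropic subgroups of $V_1 \oplus V_2$ are $3$-dimensional, and the problem reduces to linear algebra. Let $\pi_1,\pi_2$ denote the two projections, and write $G_i := G \cap V_i$ (viewed as kernels of the $\pi_j|_G$ for $j\neq i$).

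For the easy direction, given $H$ and $\phi$ I would define $G$ by the stated formula and verify the three properties directly. The kernel of $\pi_1|_G$ is $\{0\}\times H$, and surjectivity of $\pi_1|_G$ onto $V_1$ (coming from $\phi$ being defined on all of $V_1$) yields $\dim G = 2+1 = 3$; isotropy follows because for any $(x_1,y_1),(x_2,y_2)\in G$ the product $e_X(x_1,x_2)\,e_Y(y_1,y_2)$ equals $e_X(x_1,x_2)\cdot e_{H^\perp/H}(\phi(x_1),\phi(x_2))$, which is trivial precisely because $\phi$ is antisymplectic. Indecomposability is immediate: any $(x,0)\in G$ satisfies $\phi(x)=H$, so $x\in \ker\phi = 0$, forcing $G\cap V_1=0$, which rules out a nontrivial product decomposition by the dimension bound (a decomposable $G$ would have to project surjectively onto $G_1$ and $G_2$, and $\dim V_1=2<3$ rules out $G\subset V_2$).

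The main substance is the converse. Here the key step — and the step I expect to be the most delicate — is to show that indecomposability forces $G_1=0$. Assume instead $G_1\neq 0$; then $G_1$ is a nonzero isotropic subspace of the $2$-dimensional symplectic space $V_1$, hence $1$-dimensional and in fact maximal isotropic in $V_1$, so $G_1^{\perp_1} = G_1$. Pairing an arbitrary $(v_1,v_2)\in G$ against $(g_1,0)\in G_1\times\{0\}$ gives $e_X(v_1,g_1)=1$, forcing $v_1\in G_1^{\perp_1}=G_1$; hence $\pi_1(G)=G_1$. The rank-nullity formula then gives $\dim G_2 = 3 - 1 = 2$, so $G_2$ is maximal isotropic in $V_2$. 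But then $G=G_1\oplus G_2$, contradicting indecomposability. Therefore $G_1=0$, i.e., $\pi_2$ is injective on $G$.

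With that established, set $H := G_2$. Pairing elements of $G$ against $\{0\}\oplus H$ forces $\pi_2(G)\subseteq H^\perp$, so $3=\dim\pi_2(G)\leq \dim H^\perp = 4-\dim H$, giving $\dim H\leq 1$; combining with $\dim H = \dim G - \dim\pi_1(G) \geq 3 - 2 = 1$ yields $\dim H = 1$ and $\pi_1(G)=V_1$. I would then define $\phi\colon V_1\to H^\perp/H$ by $\phi(x):=y+H$ for any $y$ with $(x,y)\in G$, which is well-defined because the ambiguity in $y$ is exactly $H = G_2$. Injectivity of $\phi$ follows from $G_1=0$ (if $\phi(x)=0$ then some $(x,y)\in G$ has $y\in H$, so $(0,y)\in G$ and thus $(x,0)\in G$, forcing $x=0$), and a dimension count makes $\phi$ an isomorphism. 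Finally, the antisymplectic property is forced by the isotropy of $G$: for $(x_i,y_i)\in G$, $e_X(x_1,x_2)e_Y(y_1,y_2)=1$ together with the identification of $e_Y$ on $H^\perp/H$ with $e_n^{\lambda_2}$ gives $e_{H^\perp/H}(\phi(x_1),\phi(x_2)) = e_X(x_1,x_2)^{-1}$. This recovers the desired description of $G$.
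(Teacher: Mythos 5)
Your proof is correct and complete: both directions are handled by the right symplectic linear algebra over $\FF_\ell$ — the forward direction by the dimension/isotropy/indecomposability checks, and the converse by showing indecomposability forces $G\cap(\Jac(X)[\ell]\times\{0\})=0$, then extracting $H=G\cap(\{0\}\times\Jac(Y)[\ell])$ of dimension $1$ and reading off the antisymplectic $\phi$ from isotropy. Note that the paper does not prove this statement at all — it imports it as \cite[Proposition 1.18]{1_plus_2_2_torsion_gluing} — so there is nothing internal to compare against; your argument is the standard one. One cosmetic slip: in the forward-direction indecomposability parenthetical, the bound that rules out $G\subseteq\{0\}\times\Jac(Y)[\ell]$ is that isotropic subgroups of the $4$-dimensional space $\Jac(Y)[\ell]$ have dimension at most $2<3=\dim G$ (equivalently, that $\pi_1(G)=\Jac(X)[\ell]\neq 0$), not the dimension of $\Jac(X)[\ell]$ itself.
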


For most choices of $G$, the resulting abelian variety $(\Jac(X)\times \Jac(Y))/G$ will not be defined over $k$.
In order for $(\Jac(X)\times \Jac(Y))/G$ to be defined over $k$,
it is necessary that $G$ be Galois-stable.
\cite[Prop.~1.39]{1_plus_2_2_torsion_gluing} worked out what this means in terms of $(G,\phi)$ in \Cref{prop:complex-gluability}.

\begin{theorem}
\label{thm:gluability} \textnormal{\cite[Prop.~1.39]{1_plus_2_2_torsion_gluing}}
Following the notations of \Cref{prop:complex-gluability}, $G$ is Galois stable if and only if both of the following holds
\begin{enumerate}
    \item[(i)]\label{thm-cond:H-stability} $H$ is Galois-stable.
    \item[(ii)]\label{thm-cond:phi-equivariance} $\phi$ is Galois-equivariant.
\end{enumerate}
\end{theorem}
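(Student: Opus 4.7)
The plan is to prove both directions by working directly with the explicit description $G = \{(x,y) : \phi(x) = y + H\}$, using the Galois-equivariance of the Weil pairing as the one external input. The key observation is that both $H$ and $\phi$ can be recovered from $G$ in a Galois-compatible way, so Galois-stability of $G$ translates cleanly into conditions on each piece.

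For the easier direction ($\Leftarrow$), I would first note that Galois-stability of $H$ forces Galois-stability of $H^\perp$. Concretely, for $\sigma \in \Gal(\overline k/k)$, $P \in H^\perp$, and $h \in H$, one computes
\begin{equation}
e_\ell^{\lambda_2}(\sigma P, h) = \sigma\bigl(e_\ell^{\lambda_2}(P, \sigma^{-1} h)\bigr) = \sigma(1) = 1,
\end{equation}
using that $\sigma^{-1} h \in H$ by hypothesis. Hence $H^\perp/H$ is a well-defined Galois module, and given a pair $(x,y) \in G$, Galois-equivariance of $\phi$ gives $\phi(\sigma x) = \sigma\phi(x) = \sigma(y+H) = \sigma y + H$, so $(\sigma x, \sigma y) \in G$.

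For the converse ($\Rightarrow$), the first step is to characterize $H$ intrinsically from $G$:
\begin{equation}
H = \{\, y \in \Jac(Y)[\ell] \,:\, (0,y) \in G \,\}.
\end{equation}
Indeed, $(0,y) \in G$ iff $\phi(0) = y + H$ iff $y \in H$. Galois-stability of $G$ then immediately gives Galois-stability of $H$. For (ii), given $x \in \Jac(X)[\ell]$, pick any $y$ with $(x,y) \in G$, i.e.\ $\phi(x) = y + H$. Then $(\sigma x, \sigma y) \in G$, so $\phi(\sigma x) = \sigma y + H$, while $\sigma \phi(x) = \sigma(y+H) = \sigma y + \sigma H = \sigma y + H$ using that $H$ is already known to be Galois-stable; the two agree, which is precisely Galois-equivariance of $\phi$.

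No step is a real obstacle here: the only subtlety is ensuring the right-hand side $H^\perp/H$ is a genuine Galois module in the $\Leftarrow$ direction, which is handled by the Galois-equivariance of the Weil pairing together with Galois-stability of $H$. Everything else is a direct manipulation of the defining equation $\phi(x) = y + H$.
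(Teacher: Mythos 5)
Your proof is correct. Note that the paper does not prove this statement itself — it is quoted as \cite[Proposition 1.39]{1_plus_2_2_torsion_gluing} — so there is no internal argument to compare against; your direct verification is the natural one. Both directions are sound: recovering $H$ intrinsically as $\{y : (0,y)\in G\}$ makes the forward implication immediate, and the Weil-pairing computation showing $\sigma H^\perp \subseteq H^\perp$ is exactly what is needed for $H^\perp/H$ to be a Galois module so that equivariance of $\phi$ even makes sense. The one point worth stating explicitly in the forward direction is that before writing $\sigma(y+H)=\sigma y + H$ you need $\sigma y \in H^\perp$, i.e.\ Galois-stability of $H^\perp$; this follows from the same pairing argument you gave in the converse direction (applied after you have deduced stability of $H$ from $G$), so the gap is only expository, not mathematical.
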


Conversely, from the paragraph preceding \Cref{def:gluing}, we have the following converse.
\begin{theorem}
    \label{thm:gluability_converse}
    Suppose that
    \begin{enumerate}[label=(\roman*)]
    \item $G$ is Galois-stable (i.e., satisfies both conditions of \Cref{thm:gluability}); and
    \item the quotient $(\Jac(X)\times\Jac(Y))/G$ is not a product of two or more Jacobians.
    \end{enumerate}
Then there exists a curve $Z$ such that $\Jac(Z)\simeq (\Jac(X)\times\Jac(Y))/G$, and the isomorphism is defined over a quadratic extension of $k$.
\end{theorem}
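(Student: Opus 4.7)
The plan is to assemble the ingredients already gathered in the discussion preceding \Cref{def:gluing}. Hypothesis (i) says that $G \subseteq (\Jac(X) \times \Jac(Y))[\ell]$ is Galois-stable, so the quotient
\[
A := (\Jac(X) \times \Jac(Y))/G
\]
descends to an abelian variety over $k$. Since $G$ is indecomposable maximal isotropic (by \Cref{prop:complex-gluability} together with \Cref{thm:gluability}), the $\ell$-th power of the product polarization descends to a principal polarization $\lambda$ on $A$, exactly as in the paragraph before \Cref{def:gluing}. So $(A, \lambda)$ is a three-dimensional PPAV over $k$.

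Next I would verify that hypothesis (ii) forces $\lambda$ to be indecomposable. Any $1$-dimensional PPAV is an elliptic curve, hence the Jacobian of a genus-$1$ curve; any $2$-dimensional PPAV is either the Jacobian of a smooth genus-$2$ curve or a product of two elliptic curves with the product polarization. Consequently, a decomposable principal polarization on the three-dimensional $A$ would exhibit $A$ as a product of two or more PPAVs each of which is itself a Jacobian, contradicting (ii). Therefore $\lambda$ is indecomposable.

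Having reached an indecomposably principally polarized abelian threefold, I would invoke the theorem of Oort and Ueno cited as \cite{threefolds_are_jacobians}: over $\bar k$, every such PPAV is isomorphic, as a PPAV, to the Jacobian of a smooth genus-$3$ curve. This produces a curve $Z_{\bar k}$ over $\bar k$ together with an isomorphism $\Jac(Z_{\bar k}) \simeq A_{\bar k}$ of PPAVs.

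The only remaining, and most delicate, step is the descent from $\bar k$ to $k$. Both $Z_{\bar k}$ and the isomorphism carry a Galois descent obstruction valued in an $H^1(\Gal(\bar k/k), \operatorname{Aut})$; Serre's appendix to \cite{lauter} shows that this obstruction is always killed by a quadratic extension $k''/k$, essentially because the remaining ambiguity is governed by a hyperelliptic-involution / sign twist. Transporting $Z_{\bar k}$ and the isomorphism $\Jac(Z_{\bar k}) \simeq A_{\bar k}$ down to $k''$ then yields a smooth curve $Z$ and an isomorphism $\Jac(Z) \simeq A$ of PPAVs, both defined over $k''$, which is the desired conclusion. The heart of the argument is this last descent step, which we import wholesale from Serre; the earlier steps amount to matching the hypotheses.
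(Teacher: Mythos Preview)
Your proof is correct and follows exactly the route the paper indicates: the paper does not give a standalone proof of \Cref{thm:gluability_converse} but simply refers back to the paragraph preceding \Cref{def:gluing}, where Galois-stability of $G$ yields the quotient over $k$, maximal isotropy gives the principal polarization, indecomposability is forced by hypothesis~(ii), Oort--Ueno \cite{threefolds_are_jacobians} produces the genus-$3$ curve, and Serre's appendix to \cite{lauter} supplies the quadratic descent. Your write-up is a faithful and slightly more detailed expansion of that same argument, including the nice explicit check that any decomposition of a three-dimensional PPAV is necessarily a product of Jacobians.
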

Condition (ii) of \Cref{thm:gluability_converse} can be verified numerically in terms of a period matrix of $(\Jac(X)\times \Jac(Y))/G$: at most one of its even theta values can vanish.
For most of the paper, we focus on searching for pairs of curves $(X,Y)$ over $\QQ$ for which there exists a Galois-stable maximal isotropic subgroup $G\subset\Jac(X)[\ell]\times\Jac(Y)[\ell]$ (equivalently, $(H,\phi)$ satisfying the conditions of \Cref{thm:gluability}). For this to hold, we need both of the following to be true:
\begin{enumerate}[label=(\roman*)]
\item there exists a $\GalQ$-stable subgroup of $\Jac(Y)[\ell]$.
\item $H^\perp/H$ and $\Jac(X)[\ell]$ are isomorphic as $\GalQ$-modules, and the isomorphism is antisymplectic.
\end{enumerate}
Fix a genus $2$ curve $Y$.
In \Cref{sec:frob_element}, we describe how to rule out elliptic curves that do not satisfy (i) or the isomorphic part of (ii).
Then in \Cref{sec:symplectic_type}, we discuss how to test the remaining condition of (ii): that the isomorphism is antisymplectic.

\begin{remark}
    \label{rmk:decomposable_jacobian}
    Even if there exists a Galois-stable maximal isotropic subgroup of $\Jac(X)[\ell]\times \Jac(Y)[\ell]$, there might still not be a gluing.
    Consider the genus $2$ curve $Y$ given by \href{https://www.lmfdb.org/Genus2Curve/Q/2646/b/71442/1}{\texttt{2646.b.71442.1}} on the LMFDB, the elliptic curve $X$ given by \href{https://www.lmfdb.org/EllipticCurve/Q/21/a/6}{\texttt{21.a6}} on the LMFDB, and $\ell=3$.
    In this case, we can show that there exists such subgroup $G$. 
    However, we have a strong numerical evidence that there does not exist a gluing between $X$ and $Y$. More specifically, the Jacobian of the genus 2 curve is $2$-isogenous to a product of two elliptic curves, and quotienting out by $G$ splits the $\Jac(Y)$ factor into a product of two elliptic curves.
    We have yet to investigate a condition to tell a priori whether $(\Jac(X)\times \Jac(Y))/G$ will be a Jacobian or not.
\end{remark}
\section{Rational \texorpdfstring{$\ell$}{ℓ}-torsion Subgroups}
\label{sec:frob_element}
Throughout this section, we fix a genus $2$ curve $Y$ and study for what primes $\ell$ there might exist an $(\ell, \ell)$-gluing, and if so, whether there exists such a genus $1$ curve $X$ for which $X$ and $Y$ admit an $(\ell, \ell)$-gluing.

It turns out that for a fixed $Y$, the condition (i) of \Cref{thm:gluability} already rules out the possible primes $\ell$ to a finite set. We explain the reason in \Cref{subsec:finding_ell}. Once we know $\ell$, we present an algorithm that computes the trace of a Frobenius element acting on $H^\perp/H$ in \Cref{subsec:frob_action}. We explain how this constrains $X$ in \Cref{subsec:frob_traces}.

We now introduce the concept of a Frobenius polynomial. For any prime $p\neq\ell$ and genus two curve $Y$ for which $Y$ has good reduction $\overline Y$ modulo $p$, let $K$ be a number field such that $\Jac(Y)$ has a full $\ell$-torsion over $K$. Pick any prime ideal $\mathfrak p$ above $p$, and let $\Frob_p\in\Gal(K/\QQ)$ 
be the Frobenius element corresponding to $\mathfrak p$%
\footnote{Although $\Frob_p$ is defined only up to conjugacy class, the choice of which $\Frob_p$ we pick will not matter.}.
Then $\FF_{\mathfrak p} := \mathcal O_K/\mathfrak p$ is a finite field, and $\Frob_p$ acts on points of $\Jac(Y)$ 
in a way that
\begin{equation}
    \Frob_p((x:y:z)) \equiv  (x^p:y^p:z^p)\pmod{\mathfrak p}
\end{equation}
Thus, we get an action of $\Frob_p$ on $\Jac(\overline Y)[\ell]\simeq\FF_\ell^4$ by a linear map in $\GL_4(\FF_\ell)$ with characteristic polynomial congruent modulo $\ell$ to the \vocab{Frobenius polynomial}, which is of the form
\begin{equation}
    \label{eq:frob_poly_Y}
    F_{Y,p}(T) := T^4 - a_{p,Y}T^3 + a'_{p,Y}T^2 - pa_{p,Y}T + p^2 \in \ZZ[x],
\end{equation}
independent of $\ell$.
The coefficients $a_{p,Y}$ and $a'_{p,Y}$ are determined by the number of points of $Y$ over $\FF_p$ and $\FF_{p^2}$. See \cite[Chapter II]{milne_av} for more details.
\subsection{Finding Possible Primes}
\label{subsec:finding_ell}
The condition that $\Jac(Y)[\ell]$ must have a $1$-dimensional Galois-stable subspace $H$ already restricts the possible values of $\ell$ to a finite set, even without knowing the curve $X$. Let $\mathcal L$ be a set of primes for which $H$ exists. An algorithm to determine a finite superset of $\mathcal L$ is studied in \cite[\S 3.1]{computing_isogeny}, using Dieulefait's criterion in  \cite[\S 3.1]{dieulefait_criterion}.
Their algorithm only filters out $\ell$'s not dividing the conductor $N_Y$ and does not do anything for those $\ell$'s that divide $N_Y$.
Thus, we modify their algorithm so that it can rule out some primes $\ell$ dividing $N_Y$ as well.

The action of an element $\sigma\in\GalQ$ on $\Jac(Y)[\ell]\in\FF_\ell^4$ can be expressed as a matrix in $\GL_4(\FF_\ell)$. This induces a representation $\overline\rho_Y : \GalQ \to \GL(\Jac(Y)[\ell])$ of dimension $4$. 
The $1$-dimensional $\GalQ$-stable subspace $H$ induces two $1$-dimensional representations $\GalQ\to\FF_\ell^\times$:
\begin{itemize}
    \item on $H$ itself; we let this representation correspond to the character $$\eps_1 : \GalQ\to\GL(H) \simeq \FF_\ell^\times.$$
    \item on $\Jac(Y)[\ell] / H^\perp$ (by Galois-equivariance of the Weil pairing); we let this representation correspond to the character 
    $$\eps_2 : \GalQ \to \GL(\Jac(Y)[\ell]/H^\perp) \simeq \FF_\ell^\times.$$
\end{itemize}
By Galois-equivariance of the Weil pairing,
we have $\eps_1 \eps_2 = \chi_\ell$, where $\chi_\ell$ is the mod-$\ell$ cyclotomic character (the unique character such that $\sigma(\zeta_\ell) = \zeta_\ell^{\chi_\ell(\sigma)}$). 
Because $\FF_\ell^\times$ is abelian, by the Kronecker-Weber theorem, $\eps_i$ ($i=1,2$) must factor through $\Gal(\QQ(\zeta_e)/\QQ)\simeq (\ZZ/e\ZZ)^\times$ for some positive integer $e$ (where $\zeta_e$ is a primitive $e$-th root of unity). The smallest such $e$ is the \vocab{conductor} of $\eps_i$.

\begin{proposition}
    \label{prop:finite_conductor}
    Let $N_Y$ be the conductor of $Y$, and let
    $d$ be the largest integer such that $d^2$ divides $N_Y/\ell^{\nu_\ell(N_Y)}$. Define
    \begin{equation}
    \label{eq:conductor_bound}
    D = \begin{cases}
        \ell d & \text{if }\ell \text{ divides } N_Y, \\
        d & \text{if }\ell \text{ does not divide } N_Y.
    \end{cases}
\end{equation}
    Then the conductor of at least one of $\eps_1$ and $\eps_2$ divides $D$.
\end{proposition}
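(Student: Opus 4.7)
The plan is to exploit the Galois-stable filtration $0 \subset H \subset H^\perp \subset \Jac(Y)[\ell]$ whose graded pieces are $\eps_1$, a two-dimensional $V := H^\perp/H$, and $\eps_2$. From the Weil pairing, the pairing restricted to $V$ is perfect with values in $\mu_\ell$, so $\det V = \chi_\ell$, and comparing determinants gives the useful identity $\eps_1\eps_2 = \chi_\ell$ that the statement refers to. I will then bound the Artin conductor of $\eps_1$ and $\eps_2$ at each rational prime $p$ separately and assemble the result.

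For primes $p\neq \ell$, the plan is to use additivity of the Artin conductor exponent $a_p$ in short exact sequences of $G_{\QQ_p}$-representations, applied to the above filtration:
\begin{equation}
a_p(\overline\rho_Y) = a_p(\eps_1) + a_p(V) + a_p(\eps_2).
\end{equation}
Since the mod-$\ell$ conductor of an abelian variety is bounded by its $\ell$-adic conductor at primes $p\neq\ell$, we have $a_p(\overline\rho_Y)\le \nu_p(N_Y)$. Because $\chi_\ell$ is unramified at $p\neq\ell$, the relation $\eps_1\eps_2 = \chi_\ell$ forces $\eps_1|_{I_p} = \eps_2^{-1}|_{I_p}$, so $a_p(\eps_1) = a_p(\eps_2)$. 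Combining these gives $2 a_p(\eps_i) \le \nu_p(N_Y)$, hence $a_p(\eps_i) \le \lfloor \nu_p(N_Y)/2\rfloor$ for $i=1,2$. Multiplying over $p\neq\ell$, the prime-to-$\ell$ part of the conductor of each of $\eps_1, \eps_2$ divides $d$.

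For the prime $\ell$, since $\eps_i$ takes values in $\FF_\ell^\times$, which has order $\ell-1$ prime to $\ell$, any such character is tamely ramified at $\ell$, so $a_\ell(\eps_i)\in\{0,1\}$. This already yields $\mathfrak f(\eps_i)\mid \ell d$ in all cases, which handles the case $\ell\mid N_Y$. To refine this when $\ell\nmid N_Y$, the plan is to invoke Raynaud's classification of finite flat group schemes of type $(\ell,\dots,\ell)$ over $\ZZ_\ell$ (valid because $\ell\ge 3$): since $\Jac(Y)$ has good reduction at $\ell$, its $\ell$-torsion extends to such a group scheme, and every Jordan–Hölder constituent of $\overline\rho_Y\vert_{I_\ell}$ of dimension one is either the trivial character or $\chi_\ell\vert_{I_\ell}$. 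Both $\eps_1\vert_{I_\ell}$ and $\eps_2\vert_{I_\ell}$ occur as such constituents, and the relation $\eps_1\vert_{I_\ell}\cdot \eps_2\vert_{I_\ell}=\chi_\ell\vert_{I_\ell}$ is nontrivial, forcing exactly one of them to be the trivial character. That $\eps_i$ is then unramified at $\ell$, so its conductor divides $d$.

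The main technical obstacle is the last step: correctly invoking Raynaud's theorem for finite flat group schemes to constrain the inertia action at $\ell$, which is the step that specifically uses good reduction of $Y$ at $\ell$ and excludes the prime $\ell = 2$. The rest of the argument is a direct bookkeeping of Artin conductors using the algebraic relation $\eps_1\eps_2 = \chi_\ell$ and the standard comparison between the mod-$\ell$ and $\ell$-adic conductors at primes $p\neq\ell$.
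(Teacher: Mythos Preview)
Your proof is correct and follows essentially the same approach as the paper: both use the relation $\eps_1\eps_2 = \chi_\ell$ together with additivity of the Artin conductor exponent on the filtration $0\subset H\subset H^\perp\subset \Jac(Y)[\ell]$ to bound the prime-to-$\ell$ conductor of each $\eps_i$ by $d$, and both invoke Raynaud's theorem (via the finite flat model of $\Jac(Y)[\ell]$ over $\ZZ_\ell$ when $\ell\nmid N_Y$) to force one of $\eps_1|_{I_\ell},\eps_2|_{I_\ell}$ to be trivial. The only cosmetic difference is that the paper first twists each $\eps_i$ by a power of $\chi_\ell$ to make it unramified at $\ell$ before running the conductor bookkeeping, whereas you treat the primes $p\neq\ell$ and $p=\ell$ separately from the outset; the content is the same.
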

\begin{proof}
    First, we show in general that the conductor of $\eps_1$ and $\eps_2$ both divide $\ell d$.
    Let $I_\ell = \Gal(\Qbar_\ell / \QQ_\ell^{\text{unr}})$
    be the inertia group.
    By \cite[\S 2.3]{serre}, there exist $\alpha$ and $\beta$ such that if $\eps_1' = \eps_1\chi_\ell^{-\alpha}$ and $\eps_2' = \eps_2\chi_\ell^{-\beta}$, then
    both $\eps_1'$ and $\eps_2'$ are characters from $\GalQ\to\FF_\ell^\times$ unramified at $\ell$.
    Since $\eps_1\eps_2 = \chi_\ell$, we must have 
    $\alpha+\beta\equiv 1\pmod{\ell-1}$
    and $\eps_1' \eps_2'$ is the trivial character.
    In particular, $\eps_1'$ and $\eps_2'$ have the same conductor; let it be $k$.
    
    Therefore, $k^2$ divides the conductor of the mod-$\ell$ representation $\overline\rho_Y$, so $k^2$ divides $N_Y/\ell^{\nu_\ell(N_Y)}$, so $k$ divides $d$.
    Thus, the conductors of $\eps_1'$ and $\eps_2'$ both divides $d$, which implies that the conductors of both $\eps_1$ and $\eps_2$ divides $\ell d$.

    Now, assume $\ell$ does not divide $N_Y$. By Raynaud's result \cite[Cor.~3.4.4]{raynaud}, we have $\{\alpha,\beta\} = \{0,1\}$. Therefore, at least one of $\eps_1$ and $\eps_2$ is unramified away from $\ell$, implying that the conductor of $\eps_1$ or $\eps_2$ is the same as that of $\eps_1'$ or $\eps_2'$.
\end{proof}

\Cref{prop:finite_conductor} shows that at least one of $\eps_1$ and $\eps_2$, say $\eps$, factors through $\Gal(\QQ(\zeta_D)/\QQ) \simeq (\ZZ/D\ZZ)^\times$, inducing a Dirichlet character $\chi : (\ZZ/D\ZZ)^\times \to \FF_\ell^\times$. This narrows down the possible candidates for $\chi$ to a finite set.

Furthermore, for any prime $p$, the image of $\Frob_p$ in $\Gal(\QQ(\zeta_D)/\QQ)\simeq (\ZZ/D\ZZ)^\times$ is $p$. Thus, if $f$ is the order of $p$ in $(\ZZ/D\ZZ)^\times$, then
\begin{equation}
\eps\left(\Frob_p\right)^f = \eps\big(\Frob_p ^f\big) = \eps\left(p^f\right) = \eps(1) = 1.
\end{equation}
Moreover, $\eps(\Frob_p)$ is an eigenvalue of $\Frob_p$ (with eigenvector in $H$). This means that the polynomial $T-\eps(\Frob_p)$ divides $F_{Y,p}(T)$. In other words, $F_{Y,p}(T)$ and $T^f-1$ have a common root in $\FF_\ell$, and so the resultant $\Res(F_{Y,p}(T), T^f-1)$ is divisible by $\ell$. This gives rise to the following algorithm.
\begin{algorithm}
    \label{alg:H_test}
    \emph{Input:} a curve $Y$ of genus $2$, its conductor $N_Y$, and a finite nonempty set of primes $\mathcal P$, each of which gives good reduction of $Y$.

    \emph{Output:} a finite superset of $\mathcal L$, the set of primes $\ell$ for which there exists a one-dimensional $\GalQ$-stable subgroup $H\subset\Jac(X)[\ell]$.

    \begin{enumerate}
    \item Let $d$ be the largest positive integer such that $d^2\mid N_Y$.
    \item For each prime $p\in\mathcal P$, do the following:
    \begin{itemize}
        \item compute the Frobenius polynomial $F_{Y,p}(T)$; and
        \item compute the order $f_p$ of $p$ in $(\ZZ/d\ZZ)^\times$.
    \end{itemize}
    \item Let $\mathcal L_{\text{good}}$ be the set of primes dividing $\gcd_{p\in\mathcal P} \Res(F_{Y,p}(T), T^{f_p}-1).$
    \item For each prime $\ell$ dividing $N_Y$, we run the following test for each prime $p\in \mathcal P$:
    \begin{itemize}
        \item compute the Frobenius polynomial $F_{Y,p}(T)$ and the order $g_p$ of $p$ in $(\ZZ/\ell d\ZZ)^\times$; and
        \item check whether $F_{Y,p}(T)$ and $T^{g_p}-1$ have a common root in $\FF_\ell$.
    \end{itemize}
    Let $\mathcal L_{\text{bad}}$ be the set of primes $\ell$ dividing $N_Y$ that pass the above test for all primes $p$.
    \item Return $\mathcal L = \mathcal L_{\text{good}}\cup\mathcal L_{\text{bad}}$.
    \end{enumerate}
\end{algorithm}

The discussion preceding this algorithm shows that \Cref{alg:H_test} indeed returns a valid superset of $\mathcal L$. Such a superset is always finite: by the Riemann hypothesis for curves, all roots of $F_{Y,p}$ have absolute value $\sqrt p$, so $\Res(F_{Y,p}(T), T^{f_p}-1)\neq 0$ for all primes $p$, and hence $\mathcal L_{\text{good}}$ is always finite.

\subsection{The Frobenius Action on \texorpdfstring{$H^\perp/H$}{Hperp/H}}
\label{subsec:frob_action}
Assuming that $H$ exists, we now proceed to study the condition (ii) of \Cref{thm:gluability}. We defer the study of the symplectic condition to \Cref{sec:symplectic_type} and focus on the condition that $H^\perp/H$ and $\Jac(X)[\ell]$ are isomorphic as $\GalQ$-modules.

For any prime $p$ at which $Y$ has good reduction, 
the Frobenius element $\Frob_p$
acts on $H^\perp/H \simeq \FF_\ell^2$ by a matrix in $\GL_2(\FF_\ell)$.
We define
\begin{equation}
    \label{eq:trace_Hperp_def}
    b_p = \text{trace of } \Frob_p \text{ on } H^\perp/H.
\end{equation}
The traces $b_p$ are important data describing the Galois action on $H^\perp/H$, which will allow us to search for a suitable elliptic curve $X$. See \Cref{prop:frob_traces} for more details.
In this subsection, we will explain how to narrow down the possible values of $b_p$.

Recall from \Cref{subsec:finding_ell} that the action of $\GalQ$ on $H$ is determined by a character $\eps : \GalQ \to \FF_\ell^\times$, 
and $\eps$ factors through $\Gal(\QQ(\zeta_D)/\QQ)\simeq (\ZZ/D\ZZ)^\times$, 
where $D$ is defined in \labelcref{eq:conductor_bound}. 
Thus, $\eps$ corresponds to a Dirichlet character $\chi : (\ZZ/D\ZZ)^\times \to \FF_\ell^\times$,
and we have $\eps(\Frob_p) = \chi(p)$.

For any prime $p$, the action of $\Frob_p$ onto
the one-dimensional subspace $H$ and $\Jac(Y)[\ell]/H^\perp$ 
must be multiplication by $\chi(p)$ and $p/\chi(p)$ in some order,
which are two eigenvalues of $\Frob_p$ acting on $\Jac(Y)$.
The trace on $H^\perp/H$ must be the sum of 
the remaining two eigenvalues. Thus,
\begin{equation}    
    \label{eq:frob_trace_recover}
    b_p = a_{p,Y} - \chi(p) - \tfrac{p}{\chi(p)} \pmod\ell.
\end{equation}

Hence, if one knows $\chi$, one can determine $b_p$ for all $p$.
For a fixed $Y$, there are finitely many possibilities 
for $\chi : (\ZZ/D\ZZ)^\times \to \FF_\ell^\times$.
We rule out $\chi$ by checking that, for any prime $q$
not dividing $N_Y$, the number $\chi(q)$ is a root of $F_{Y,q}(T)$.
This leads to the following algorithm.

\begin{algorithm}
    \label{alg:frob_trace_recover}
    \emph{Input.} A genus $2$ curve $Y$, its conductor $N_Y$, a prime $\ell$, and a finite set of primes $\mathcal Q$.
    
    \emph{Output.} A finite list of candidate functions that take a prime number $p\neq\ell$ and output $b_p$ (defined in \eqref{eq:trace_Hperp_def}) modulo $\ell$.
    
    For each one-dimensional Galois-stable subgroup $H\subset \Jac(Y)[\ell]$, the function that outputs the trace of $\Frob_p$ on $H^\perp/H$ must be in the output list
    (but there might be extraneous functions). 

    \begin{enumerate}
    \item Compute $D$ as defined in \labelcref{eq:conductor_bound}.
    \item Enumerate the set $\mathcal X$ of all Dirichlet characters $(\ZZ/D\ZZ)^\times \to \FF_\ell^\times$.
    \item For each prime $q\in\mathcal Q$, compute $F_{Y,q}(T)$. Remove from $\mathcal X$ any character $\chi$ such that $\chi(q)$ is not a root of $F_{Y,q}(T)$.
    \item For each function $\chi\in \mathcal X$, we return the function
    $$p\ \mapsto\ a_{p,Y} - \chi(p) - \tfrac{p}{\chi(p)}\pmod\ell.$$
    \end{enumerate}
\end{algorithm}
From \labelcref{eq:frob_trace_recover}, we can see that the output of this algorithm includes at least one function for each possible $H$. If there are multiple possible $H$'s, the algorithm returns multiple functions, each corresponding to a candidate $H$.

\subsection{Frobenius Traces of Gluable Elliptic Curve}
\label{subsec:frob_traces}
For this entire section, fix a genus $2$ curve $Y$
and a prime $\ell\geq 3$ such that there exists
a one-dimensional $\GalQ$-stable subgroup
$H\subset \Jac(Y)[\ell]$.
For any prime $p$ at which $Y$ has good reduction,
let $b_p$ denote the trace of $\Frob_p$ in $H^\perp/H$
(which we have narrowed down the possibilities 
in \Cref{alg:frob_trace_recover}).

Suppose that $X$ is an elliptic curve such that $\Jac(X)[\ell]$
and $H^\perp/H$ are isomorphic as $\GalQ$-modules.
By comparing $b_p$ with the trace of Frobenius at $p$ of $X$ (which we denoted $a_{p,X}$), we can rule out some of such $X$, as detailed in the following proposition.
\begin{proposition}
    \label{prop:frob_traces}
    Let $X$ be an elliptic curve such that 
    $\Jac(X)[\ell]$ and $H^\perp/H$ are isomorphic as $\GalQ$-modules.
    Let $p\neq\ell$ be a prime such that $Y$ has good reduction modulo $p$.
    \begin{enumerate}[label=(\roman*)]
    \item If $X$ has good reduction modulo $p$, then $b_p\equiv a_{p,X}\pmod\ell$.
    \item If $X$ has split multiplicative reduction modulo $p$, then $b_p \equiv 1+p\pmod{\ell}$.
    \item If $X$ has non-split multiplicative reduction modulo $p$, then $b_p \equiv -(1+p)\pmod{\ell}$.
    \item If $\ell\geq 5$, then $X$ cannot have additive reduction modulo $p$.
    \end{enumerate}
\end{proposition}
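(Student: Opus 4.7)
The plan is to first establish that in all four cases $\Jac(X)[\ell]$ must be unramified at $p$. Since $Y$ has good reduction at $p$ and $p \neq \ell$, N\'eron--Ogg--Shafarevich implies that $\Jac(Y)[\ell]$ is an unramified $\Gal(\overline{\QQ_p}/\QQ_p)$-module; the submodules $H$ and $H^\perp$, and hence the quotient $H^\perp/H$, inherit this property. The $\GalQ$-module isomorphism $\Jac(X)[\ell] \simeq H^\perp/H$ then transfers unramifiedness to $\Jac(X)[\ell]$.

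From here, parts (i)--(iii) reduce to standard local computations. For (i), the characteristic polynomial of $\Frob_p$ on the $\ell$-adic Tate module of $X$ is $T^2 - a_{p,X} T + p$, whose reduction modulo $\ell$ gives the characteristic polynomial on $\Jac(X)[\ell]$, so the trace is $a_{p,X} \pmod \ell$. For (ii), the theory of the Tate curve supplies a short exact sequence of $\Gal(\overline{\QQ_p}/\QQ_p)$-modules
\begin{equation}
    0 \to \mu_\ell \to \Jac(X)[\ell] \to \FF_\ell \to 0,
\end{equation}
so $\Frob_p$ acts upper-triangularly with diagonal $(p,1) \pmod \ell$, yielding trace $p+1$. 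For (iii), $X$ is the twist of a Tate curve by the unramified quadratic character $\eta$ (with $\eta(\Frob_p) = -1$), so the diagonal becomes $(-p,-1)$ and the trace is $-(p+1)$.

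Part (iv) is the main obstacle. Assuming $X$ has additive reduction at $p$ with $\ell \geq 5$, I aim to derive a contradiction with the unramifiedness of $\Jac(X)[\ell]$ at $p$ by splitting into two sub-cases. If $X$ has potentially multiplicative reduction, it becomes multiplicative only after a twist by a ramified quadratic character $\psi$, so inertia acts on $\Jac(X)[\ell]$ through $\psi$; since $\ell \geq 3$, this image remains non-trivial modulo $\ell$, contradicting unramifiedness. If instead $X$ has potentially good reduction, then a classical theorem of Serre--Tate says that the image of inertia in $\GL_2(\ZZ_\ell)$ is a finite group $G$ whose order divides $24$ (indeed $|G| \leq 6$ for $p \geq 5$, with the larger possibilities arising only for $p \in \{2,3\}$). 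Since $\gcd(|G|, \ell) = 1$ for $\ell \geq 5$, the reduction $G \hookrightarrow \GL_2(\FF_\ell)$ is injective, so unramifiedness forces $G = 1$, i.e., $X$ has good reduction at $p$, contradicting additive reduction. The hypothesis $\ell \geq 5$ is essential precisely to ensure $\ell \nmid |G|$ in all residue characteristics; for $\ell = 3$ one loses the argument at $p \in \{2,3\}$, where $3$ can divide $|G|$.
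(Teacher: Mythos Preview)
Your proof is correct. Parts (i)--(iii) match the paper's argument essentially verbatim: both use the Tate curve description for multiplicative reduction and the unramified quadratic twist for the non-split case.

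For part (iv), you take a genuinely different route from the paper. The paper argues directly from the N\'eron model: the component group in the additive case has order at most $4$, so for $\ell\geq 5$ all of $X[\ell]$ over any unramified extension lands in the identity component $X_0$; since the formal group $X_1$ has no $\ell$-torsion (as $p\neq\ell$), the full group $(\ZZ/\ell\ZZ)^2$ would have to inject into the additive group $\tilde X_{\mathrm{ns}}(\overline{\FF}_p)\simeq(\overline{\FF}_p,+)$, which is absurd. You instead split into potentially multiplicative versus potentially good reduction. In the first sub-case you use that the twisting character is ramified, so inertia acts by $-1$ times a unipotent on $X[\ell]$, which is non-trivial for $\ell\geq 3$. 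In the second you invoke the Serre--Tate bound that the inertia image in $\GL_2(\ZZ_\ell)$ is finite of order dividing $24$, hence injects into $\GL_2(\FF_\ell)$ once $\ell\geq 5$, and then N\'eron--Ogg--Shafarevich gives the contradiction. Your approach is slightly more structural and makes the role of the hypothesis $\ell\geq 5$ transparent (it is exactly the condition $\ell\nmid 24$); the paper's approach is more self-contained, needing only Silverman's reduction theory rather than the classification of inertia images. One small inaccuracy in your final remark: for $\ell=3$ the argument can also fail at primes $p\geq 5$, since there the inertia image can have order $3$ or $6$; but this does not affect the case $\ell\geq 5$ under consideration.
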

\begin{proof}
\hspace{0pt}
\begin{enumerate}[label=(\roman*)]
\item The Frobenius element $\Frob_p$ acts on $H^\perp/H$ and on $X_{\FF_p}[\ell]$ by the same matrix in $\GL_2(\FF_\ell)$ up to a change of basis. Thus, they have the same trace modulo $\ell$.
\item By the theory on Tate's curve \cite[Thm.~V.5.3.]{silverman_advanced}, there exists $q\in\QQ_p$ such that $X_{\overline{\QQ}_p} \simeq \overline{\QQ_p}^\times/q^{\ZZ}$ as groups.
This isomorphism is Galois equivariant.
By considering the $\ell$-torsion of both sides,
we find that $X_{\overline\QQ_p}[\ell] \simeq \langle q^{1/\ell}, \zeta_\ell\rangle$
(where $\zeta_\ell$ is the $\ell$-root of unity).
Therefore, the Frobenius element $\Frob_p$ acts on
$X_{\FF_p}[\ell]$ by matrix $\ttwomatrix 1*0p$,
so it must act on $H^\perp/H$ by the same matrix
in modulo $\ell$.
Hence, we conclude that $b_p\equiv 1+p\pmod \ell$.
\item There exists a quadratic twist $X'$ of $X$ 
such that $X'$ has a split multiplicative reduction modulo $\ell$.
By (ii), the trace of $\Frob_p$ acting on $X'_{\FF_p}[\ell]$ is $1+p$.
Thus, the trace of $\Frob_p$ acting on $X_{\FF_p}[\ell]$ is $-(1+p)$,
which must be equal to $b_p$ in modulo $\ell$.
\item If $p\neq\ell\geq 5$ and $X$ has additive reduction modulo $p$, then we claim that the torsion field $\QQ(X[\ell])$ is ramified at $p$.
To see this, following the notations of \cite[Chapter VII]{silverman}, let $X_0$ and $X_1$ denote the points in $X_{\QQ_p}$ that reduce to non-singular point and the additive identity $\widetilde O$ in $X_{\FF_p}$, respectively. 
By \cite[Thm.~VII.6.1]{silverman}, the size of the group $X_{\QQ_p}/X_0$ is at most $4$. 
Thus, for $\ell \geq 5$, we have $X_{\QQ_p}[\ell] \subseteq X_0$.
However, if $\QQ(\Jac(X)[\ell])$ were unramified at $p$, 
$X_1$ would not have any $\ell$-torsion, so by \cite[Prop.~2.1]{silverman}, one would have $(\ZZ/\ell \ZZ)^2$ as a subgroup of $(X_{\overline\FF_p})_{\text{ns}} \simeq (\overline{\FF}_p, +)$, a contradiction.

Thus, the Galois representation $\overline\rho_X : \GalQ \to \GL_2(\FF_\ell)$ is ramified at $p$.
However, the Galois representation $\rhoHperpH : \GalQ \to \GL_2(\FF_\ell)$ is isomorphic to $\overline\rho_X$ but is unramified at $p$, a contradiction. \qedhere
\end{enumerate}
\end{proof}
\begin{corollary}
    \label{cor:hasse_weil_bound}
    Suppose $p\neq \ell$ and $\ell\geq 5$. If
    \begin{equation}b_p \notin \{-(p+1), p+1\} \cup [-2\sqrt p, 2\sqrt p],
    \end{equation}
    then there is no elliptic curve $X$ such that $H^\perp/H$ and $\Jac(X)[\ell]$ are isomorphic as $\GalQ$-modules.
\end{corollary}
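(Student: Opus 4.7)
The plan is to prove this by contraposition: suppose such an elliptic curve $X$ exists, and show that at least one integer in $\{-(p+1), p+1\} \cup [-2\sqrt p, 2\sqrt p]$ is congruent to $b_p$ modulo $\ell$ (which is what we mean when we write ``$b_p \in \ldots$'' for the class $b_p \in \FF_\ell$). Since Proposition 3.4 exhausts the possible reduction types of $X$ at $p$, we simply run through the four cases.

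First I would note that since $\ell \geq 5$, Proposition 3.4(iv) immediately rules out additive reduction, so the only remaining possibilities are good reduction, split multiplicative reduction, or non-split multiplicative reduction. In the split multiplicative case Proposition 3.4(ii) gives $b_p \equiv p+1 \pmod \ell$, and in the non-split case Proposition 3.4(iii) gives $b_p \equiv -(p+1) \pmod \ell$; in either situation the integer $\pm(p+1)$ lies in $\{-(p+1), p+1\}$, contradicting the hypothesis.

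Finally, if $X$ has good reduction at $p$, then Proposition 3.4(i) yields $b_p \equiv a_{p,X} \pmod \ell$, and by the Hasse--Weil bound for elliptic curves one has $|a_{p,X}| \leq 2\sqrt p$, so $a_{p,X}$ is an integer in $[-2\sqrt p, 2\sqrt p]$ congruent to $b_p$ modulo $\ell$, again contradicting the hypothesis. All cases lead to contradictions, so no such $X$ exists.

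There is essentially no hard step here — the statement is a direct repackaging of Proposition 3.4 together with the Hasse--Weil bound. The only point worth being careful about is the convention for interpreting the element $b_p \in \FF_\ell$ as an integer when comparing with the sets $\{-(p+1), p+1\}$ and $[-2\sqrt p, 2\sqrt p]$; the natural reading, which matches the intended application of ruling out candidate curves $X$, is ``there is no integer in the given set whose reduction modulo $\ell$ equals $b_p$.''
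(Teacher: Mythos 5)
Your proposal is correct and follows exactly the paper's route: the paper's proof of this corollary is the one-line ``Follows from \Cref{prop:frob_traces} and Hasse-Weil bound,'' and your case analysis on reduction types (with the additive case excluded by $\ell\geq 5$ and the Hasse--Weil bound handling good reduction) is simply that argument spelled out. Your remark on interpreting $b_p\in\FF_\ell$ against the integer sets is a sensible clarification but does not change the substance.
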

\begin{proof}
    Follows from \Cref{prop:frob_traces} and Hasse-Weil bound.
\end{proof}
\Cref{cor:hasse_weil_bound} can be used to prove that some genus $2$ curves have no gluable elliptic curves. We provide an example for $\ell\in\{11,13,19\}$.
\begin{example}
    When $\ell=11$,
    The curve $Y$ with LMFDB label  \href{https://www.lmfdb.org/Genus2Curve/Q/353/a/353/1}{\texttt{353.a.353.1}} and equation $y^2 + (x^3 + x + 1)y = x^2$ has rational $11$-torsion point (so $H$ exists), and its Frobenius polynomial at $2$ is
    \begin{align*}F_{Y,2}(T) 
    &= T^4+T^3+3T^2+2T^3+4 = (T-1)(T-2)(T^2+4T+2).
    \end{align*}
    Following \Cref{alg:frob_trace_recover}, we have $\chi\equiv 1$, so we can compute $b_2 = -4\pmod{11}$.
    Thus, $Y$ has no gluable elliptic curves.

    Similarly, the curves with LMFDB labels \href{https://www.lmfdb.org/Genus2Curve/Q/349/a/349/1}{\texttt{349.a.349.1}} and \href{https://www.lmfdb.org/Genus2Curve/Q/169/a/169/1}{\texttt{169.a.169.1}} have no gluable elliptic curves with $\ell=13$ and $\ell=19$, respectively.
\end{example}
\begin{remark}
When $\ell\in\{3,5\}$, by \cite[Thm.~3]{modularity_mod_5},
there always exists an elliptic curve $X$ such that
$H^\perp/H$ and $\Jac(X)[\ell]$ are isomorphic as $\GalQ$-modules.
Furthermore, one can make it isomorphic with a correct symplectic type \cite[\S 13]{ec_hessian}.
Thus, there always exists a gluable elliptic curve provided that $\ell\in\{3,5\}$ and $H$ exists.

We do not yet know whether there is an example of a genus $2$ curve for which $H$ exists when $\ell=7$, but there is no gluable elliptic curve.
\end{remark}

\section{Proving Isomorphism of Galois Representations}
\label{sec:proving_isomorphism}
In this section, we utilize Serre's modularity conjecture to study the action of $\GalQ$ on $H^\perp/H$ more closely. The result of this is that, if the Galois representation associated to $H^\perp/H$ is irreducible, then there is an algorithm to rigorously prove that $H^\perp/H$ and $\Jac(X)[\ell]$ are isomorphic as $\GalQ$-modules.

We begin by reviewing the statement of Serre's modularity conjecture in \Cref{subsec:serre_conjecture} and use it to show that the representation corresponding to $H^\perp/H$ is modular in \Cref{subsec:modularity}.
After this, the algorithm is divided into two steps.
\begin{enumerate}
    \item Proving that there indeed exists a one-dimensional  $\GalQ$-stable subgroup $H\subset \Jac(Y)[\ell]$. We do this by utilizing a numerical algorithm from analytic description of $Y$. This is detailed in \Cref{subsec:proving_H}.
    \item Proving the isomorphism between $H^\perp/H$ and $\Jac(X)[\ell]$. We do this by combining modularity of the representation corresponding to $H^\perp/H$ and Sturm's bound.
    This gives an explicit bound on how many Frobenius traces to check, which is explained in \Cref{subsec:irreducible}.
\end{enumerate}
Finally, in \Cref{subsec:reducible_case}, we provide some comments about when the reducible case.

\subsection{Serre's Modularity Conjecture}
\label{subsec:serre_conjecture}
In this section, we briefly review the statement of Serre's modularity conjecture, which was proven by Khare and Wintenberger in \cite{modularity}.

For any positive integer $N$, we have the congruence subgroup
\begin{equation}
    \Gamma_0(N) = \left\{\ttwomatrix abcd \in \SL_2(\ZZ): c\equiv 0\pmod N\right\}\ \subset\ \SL_2(\ZZ).
\end{equation}
For any Dirichlet character $\eps : (\ZZ/N\ZZ)^\times \to \CC^\times$ and any positive integer $k$, we have the (finite-dimensional) $\CC$-vector space $S_k(N,\eps)$ of cusp forms in $\Gamma_0(N)$ with weight $k$ and nebentypus $\eps$.
We also denote $S_2(N) := S_2(N,\chi_{\text{triv}})$, where $\chi_{\text{triv}}(a)=1$ for all $a$.

Each function $f\in S_2(N,\chi)$ has a \vocab{$q$-expansion} $f(z) = \sum_{n\geq 1}a_nq^n$, where $q=e^{2\pi iz}$. 
For any prime $\ell$, one can reduce modular forms modulo $\ell$ by taking any mapping from $\Qbar$ to $\overline{\FF}_\ell$ and reducing the $q$-expansion coefficients along the mapping. This gives a \vocab{modulo $\ell$-modular form} $\overline f = \sum_{n\geq 1}\overline a_n q^n$ where $\overline a_n \in \overline{\FF}_\ell$. 

We now state Serre's modularity conjecture.
\begin{theorem}[Serre's modularity conjecture, \cite{modularity}]
    Suppose $\overline\rho : \GalQ \to \GL_2(\FF_\ell)$ satisfies
    \begin{itemize}
    \item $\overline\rho$ is irreducible; and
    \item $\overline\rho$ is odd, i.e., $\det \overline\rho(c) = -1$, where $c$ is the complex conjugation map.
    \end{itemize}
    Then there exist positive integers $N=N(\overline\rho)$, $k=k(\overline\rho)$, a character $\eps=\eps(\overline\rho) : (\ZZ/N\ZZ)^\times \to \CC^\times$, and a modulo $\ell$ modular form $f\in\FF_\ell[[q]]$ arising from $S_k(N,\eps)$ with coefficients in $\FF_\ell$ such that
    $a_p \equiv \operatorname{tr}\rho(\Frob_p)\pmod p$
    for all $p$ not dividing $N$.
\end{theorem}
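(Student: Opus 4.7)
The statement is Serre's modularity conjecture as proven by Khare and Wintenberger. A self-contained proof is completely out of reach of a short proposal---it is a landmark theorem built on decades of work in the Langlands program and modularity lifting---so I can only sketch the overall strategy at a very high level, and flag that in the actual paper this is invoked as a black box.

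The Khare--Wintenberger proof proceeds by an intricate induction on the complexity of $\overline\rho$, measured through the Serre conductor $N(\overline\rho)$, the weight $k(\overline\rho)$, and the prime $\ell$. The central technique is the \emph{``killing ramification'' trick}: given $\overline\rho : \GalQ \to \GL_2(\overline{\FF}_\ell)$, one first lifts it to a geometric $\ell$-adic representation $\rho$, and by Taylor's potential modularity theorem one embeds $\rho$ in a compatible system of Galois representations. Reducing modulo a carefully chosen auxiliary prime $\ell'$ produces a new residual representation $\overline\rho'$ of strictly smaller complexity, to which the induction hypothesis applies. One then propagates modularity back from $\overline\rho'$ up to the compatible system, and hence down to $\overline\rho$, using an $R = \mathbb{T}$ modularity lifting theorem in the Wiles--Taylor--Wiles--Kisin framework. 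Level and weight optimization (Ribet, Edixhoven, Diamond, Buzzard) ensure that the induction is actually on a well-founded quantity, and base cases in level one and small level come from classical computations of modular forms together with Tate's theorem on the nonexistence of certain low-level modular forms.

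The main obstacle is the modularity lifting theorems themselves in their most delicate incarnations: residually dihedral representations, the case $\ell = 2$, and non-crystalline deformation rings. Overcoming these required Kisin's potentially semistable deformation ring machinery and its refinements, together with the $p$-adic Langlands program for $\GL_2(\QQ_p)$ developed by Breuil, Colmez, Emerton, and others. It is these pieces---together with careful bookkeeping to guarantee that the switching-prime trick actually decreases the chosen complexity invariant at each stage, and that the various weight- and level-lowering steps are compatible with moving between primes---that constitute the bulk of the technical difficulty. For the purposes of the present paper, it suffices to cite the theorem in the form stated and apply it only to residually irreducible, odd two-dimensional representations arising from $H^\perp/H$.
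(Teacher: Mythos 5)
Your proposal is correct in spirit and matches the paper's handling of this statement: the paper does not prove Serre's modularity conjecture but cites Khare--Wintenberger \cite{modularity} as a black box, exactly as you conclude one must. Your high-level sketch of the killing-ramification induction, potential modularity, and modularity lifting machinery is an accurate summary of the cited proof, so there is nothing to correct.
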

\cite{serre} gives an explicit recipe for determining $N(\overline\rho)$, $\eps(\overline\rho)$, and $k(\overline\rho)$. We will not reproduce the full definitions here, but it is important to note that they satisfy $\det\overline\rho = \eps (\chi_\ell)^{k-1}$ and $k\in [2,\ell^2+1]$.

\subsection{Modularity of the Representation}
\label{subsec:modularity}
Let $Y$ be a genus $2$ curve and $\ell$ be a prime such that there exists a Galois-stable subgroup $H\subseteq\Jac(Y)[\ell]$.
Let $V_Y = \Jac(Y)[\ell]$.
We now use Serre's modularity conjecture to study the $\GalQ$-module structure of $H^\perp/H$.

Since $H$ is $\GalQ$-stable, by Galois-equivariance of the Weil pairing, we deduce that $H^\perp$ is $\GalQ$-stable. Thus, the action of $\GalQ$ onto $H^\perp/H$ is well-defined and thus induces the $2$-dimensional $\bmod\,\ell$-representation $\rhoHperpH : \GalQ \to \GL(H^\perp/H)$.
First, we have the following result about determinant.
\begin{proposition}
    \label{prop:det_Hperp_H}
    For any $\sigma\in\GalQ$, we have
    \begin{equation}
    \det(\rhoHperpH(\sigma)) = \chi_\ell(\sigma),
    \end{equation}
    where $\chi_\ell : \GalQ\to\FF_\ell$ is the cyclotomic character, determined by the action of the primitive $\ell$-th root of unity. 
\end{proposition}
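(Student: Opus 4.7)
The plan is to exploit the symplectic structure on $V_Y = \Jac(Y)[\ell]$ coming from the principal polarization. Concretely, the Weil pairing gives a non-degenerate alternating pairing
\[
e_\ell \colon V_Y \times V_Y \to \mu_\ell,
\]
which is Galois-equivariant in the sense that $e_\ell(\sigma v, \sigma w) = \sigma(e_\ell(v,w))$ for all $\sigma \in \GalQ$. After identifying $\mu_\ell \simeq \FF_\ell^\times$, the Galois action on $\mu_\ell$ is precisely multiplication by the mod-$\ell$ cyclotomic character $\chi_\ell$, so $e_\ell(\sigma v, \sigma w) = \chi_\ell(\sigma)\, e_\ell(v,w)$.

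Next, I would show that $e_\ell$ descends to a non-degenerate alternating pairing on the $2$-dimensional quotient $H^\perp/H$. Since $H$ is isotropic, $H \subseteq H^\perp$, and by definition of $H^\perp$ the restriction of $e_\ell$ to $H^\perp \times H^\perp$ vanishes whenever either argument lies in $H$. Therefore $e_\ell$ induces a well-defined alternating pairing
\[
\overline{e}_\ell \colon (H^\perp/H) \times (H^\perp/H) \to \mu_\ell,
\]
which is non-degenerate by a standard symplectic linear algebra argument (the radical of the induced pairing is exactly $H/H = 0$). Because $H$ and $H^\perp$ are themselves $\GalQ$-stable, $\overline{e}_\ell$ inherits the same Galois-twist property: $\overline{e}_\ell(\sigma \bar v, \sigma \bar w) = \chi_\ell(\sigma)\, \overline{e}_\ell(\bar v, \bar w)$.

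Finally, I would conclude via a short $2 \times 2$ linear algebra computation. Picking a symplectic basis of $H^\perp/H$ and letting $M = \rhoHperpH(\sigma)$, the identity $M^{\top} J M = \chi_\ell(\sigma)\, J$ (with $J = \ttwomatrix{0}{1}{-1}{0}$) forces $\det(M) = \chi_\ell(\sigma)$. Equivalently, $M$ acts on the one-dimensional space $\wedge^2(H^\perp/H)$ by $\det M$ on the one hand and by $\chi_\ell(\sigma)$ on the other, since $\overline{e}_\ell$ gives a $\GalQ$-equivariant isomorphism $\wedge^2(H^\perp/H) \simeq \mu_\ell$. This yields the claim.

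The argument is essentially formal once the Weil pairing and its Galois-equivariance are in hand; the only point requiring care is the verification that the descended pairing on $H^\perp/H$ is non-degenerate and retains the $\chi_\ell$-twist, but both follow from standard properties of symplectic forms and the $\GalQ$-stability of $H$ and $H^\perp$.
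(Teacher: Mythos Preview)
Your argument is correct, but it takes a different route from the paper's. The paper argues via the filtration $0 \subset H \subset H^\perp \subset V_Y$: multiplicativity of determinants along the filtration gives
\[
\det(\sigma|_H)\,\det(\sigma|_{H^\perp/H})\,\det(\sigma|_{V_Y/H^\perp}) = \det(\sigma|_{V_Y}) = \chi_\ell(\sigma)^2,
\]
and then Galois-equivariance of the Weil pairing (which identifies $V_Y/H^\perp$ with the $\chi_\ell$-twist of the dual of $H$) yields $\det(\sigma|_H)\det(\sigma|_{V_Y/H^\perp}) = \chi_\ell(\sigma)$, whence the claim follows by division. Your approach instead descends the Weil pairing directly to a non-degenerate alternating form on the $2$-dimensional space $H^\perp/H$ and reads off the determinant as the similitude factor via $\wedge^2(H^\perp/H)\simeq\mu_\ell$. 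This is slightly more self-contained and avoids computing the full determinant on $V_Y$; the paper's version, on the other hand, dovetails with its surrounding discussion of the characters $\eps_1,\eps_2$ on $H$ and $V_Y/H^\perp$, which it needs anyway.
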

\begin{proof}
    Consider the $\Gal(\Qbar/\QQ)$-invariant filtration
    $0 \subset H \subset H^\perp \subset V_Y$,
    which has successive quotients are $H$, $H^\perp/H$, and $V_Y/H^\perp$. Therefore, we deduce that
    \begin{equation}
    \det(\sigma|_H) \det(\sigma|_{H^\perp/H}) \det(\sigma|_{V_Y/H^\perp}) = \det(\sigma|_{V_Y}) = \chi_\ell(\sigma)^2.
    \end{equation}
    However, by Galois equivariance of the Weil pairing, $\det(\sigma|_H) \det(\sigma |_{V_Y/H^\perp}) = \chi_\ell(\sigma)$, so it follows that $\det(\rhoHperpH(\sigma)) = \det(\sigma|_{H^\perp/H}) = \chi_\ell(\sigma)$.
\end{proof}
Plugging the complex conjugation map $c$ into \Cref{prop:det_Hperp_H}, we see that $\det \rhoHperpH(c) = \chi_\ell(c) = -1$, so $\rhoHperpH$ is odd.
Thus, if we assume that $\rhoHperpH$ is irreducible, then by Serre's modularity conjecture and the previous lemma,
$\rhoHperpH$ is modular with trivial nebentypus.
The next proposition then restricts the level.
\begin{proposition}
    If $\rhoHperpH$ is irreducible, then the level $N(\rhoHperpH)$ divides $N_Y$.
\end{proposition}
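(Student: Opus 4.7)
The plan is to bound the Serre level $N(\rhoHperpH)$ by the conductor $N_Y$ using additivity of the Artin conductor in short exact sequences, applied to the three-step Galois-stable filtration
\begin{equation*}
0 \subset H \subset H^\perp \subset V_Y.
\end{equation*}
Recall that $H$ is $\GalQ$-stable by hypothesis, and $H^\perp$ is $\GalQ$-stable by the Galois-equivariance of the Weil pairing, so this is indeed a filtration in the category of $\GalQ$-modules, with graded pieces $H$, $H^\perp/H$, and $V_Y/H^\perp$.

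For each prime $p \neq \ell$, let $f_p(V)$ denote the Artin conductor exponent at $p$ of a Galois representation $V$. The first key step is to apply conductor additivity in short exact sequences twice, yielding
\begin{equation*}
f_p(\overline\rho_Y) = f_p(H) + f_p(\rhoHperpH) + f_p(V_Y/H^\perp),
\end{equation*}
where $\overline\rho_Y : \GalQ \to \GL(V_Y)$ denotes the full mod-$\ell$ representation. Since all three exponents on the right are non-negative, this gives the bound $f_p(\rhoHperpH) \leq f_p(\overline\rho_Y)$.

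The second key step is to bound $f_p(\overline\rho_Y)$ by $\nu_p(N_Y)$ for each $p \neq \ell$. This is a standard fact: the $\ell$-adic Tate module of $\Jac(Y)$ has Artin conductor (at primes $p \neq \ell$) equal to the conductor exponent of $\Jac(Y)$ at $p$ by the Néron--Ogg--Shafarevich criterion in its refined conductor form, and reducing modulo $\ell$ can only decrease the Artin conductor. Since $N_Y$ is precisely the conductor of $\Jac(Y)$, we obtain $f_p(\overline\rho_Y) \leq \nu_p(N_Y)$. Combining the two inequalities and using that by Serre's recipe $N(\rhoHperpH)$ is the prime-to-$\ell$ part of the Artin conductor of $\rhoHperpH$, we get
\begin{equation*}
N(\rhoHperpH) \;=\; \prod_{p \neq \ell} p^{f_p(\rhoHperpH)} \;\Big|\; \prod_{p \neq \ell} p^{\nu_p(N_Y)} \;\Big|\; N_Y.
\end{equation*}

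The main obstacle is the second step, namely justifying $f_p(\overline\rho_Y) \leq \nu_p(N_Y)$: while conductor additivity in exact sequences is clean, the comparison between the Artin conductor of the mod-$\ell$ representation and the conductor of the underlying abelian variety requires citing the appropriate result from the theory of Néron models (e.g.\ Grothendieck's SGA 7 or Serre--Tate). Once that input is in hand, the rest of the argument is purely formal.
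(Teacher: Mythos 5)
Your argument is essentially the paper's: both use the Galois-stable filtration $0 \subset H \subset H^\perp \subset V_Y$ to compare the conductor of $\rhoHperpH$ with that of $\overline\rho_Y$ prime by prime, and then invoke the standard fact that the latter divides $N_Y$. One caveat: for mod-$\ell$ representations the Artin conductor is \emph{not} exactly additive in short exact sequences (taking inertia invariants is only left-exact in characteristic $\ell$, e.g.\ a unipotent tame action has conductor exponent $1$ while its graded pieces are unramified), so your displayed equality should be the inequality $f_p(\overline\rho_Y) \geq f_p(H) + f_p(\rhoHperpH) + f_p(V_Y/H^\perp)$ — which is exactly what you need, since the Swan part is genuinely additive and the tame part satisfies this inequality; the paper's own proof makes the analogous slip by asserting equality of invariant dimensions across the filtration.
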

\begin{proof} 
    We claim that the conductor divides the conductor of the representation $\overline\rho : \Gal(\Qbar/\QQ)
     \to \GL(V_Y)$, which in turn divides the conductor of $Y$.

    To prove this, it suffices to show that $\nu_p(N(\rhoHperpH))
    \leq \nu_p(N(\overline\rho_Y))$ for all primes $p$.  
    Recall from \cite{serre} that the exponents of $p$ in the conductors
    are defined by
    \begin{align}
    \nu_p(N(\rhoHperpH)) &= \sum_{i\geq 0} \frac{|G_i|}{|G_0|}
    \dim (H^\perp/H)^{G_i} \\
    \nu_p(N(\overline\rho_Y)) &= \sum_{i\geq 0} \frac{|G_i|}{|G_0|}
    \dim (V_Y)^{G_i},
    \end{align}
    where $G_i$ are $p$-adic ramification groups 
    (with lower numbering)
    and $V^G$ is the subspace of $V$ fixed by $G$.

    Thus, it suffices to show that for any subgroup $G\subset\Gal(\Qbar/\QQ)$,
    we have $\dim (H^\perp/H)^G \leq \dim (V_Y)^G$.
    
    Consider the $\Gal(\Qbar/\QQ)$-invariant filtration
    $0 \subset H \subset H^\perp \subset V_Y$,
    which has successive quotients $H$, $H^\perp/H$, and $V_Y/H^\perp$, respectively. For any subgroup $G\subset\GalQ$, we have
    \begin{equation}
        \dim H^G + \dim(H^\perp/H)^G + \dim (V_Y/H^\perp)^G = \dim(V_Y)^G,
    \end{equation}
    so $\dim(H^\perp/H)^G \leq \dim (V_Y)^G$, as desired.
\end{proof}
We now determine the weight of the representation.

\begin{theorem}
    \label{thm:weight}
    Assume that $\rhoHperpH$ is irreducible (hence modular) and $\ell$ does not divide $N_Y$. Then $\rhoHperpH$ is modular with weight $2$.
\end{theorem}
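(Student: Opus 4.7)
My plan is to invoke Serre's recipe for the weight \cite{serre}. The weight $k(\rhoHperpH)$ is determined by the restriction of $\rhoHperpH$ to a decomposition group at $\ell$, and for irreducible odd representations, $k=2$ precisely when this restriction is \emph{finite flat}, i.e., arises as the generic fiber of a finite flat commutative $\ell$-group scheme over $\ZZ_\ell$. So it suffices to exhibit such a finite flat model for $\rhoHperpH$ at $\ell$.

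Since $\ell\nmid N_Y$, the curve $Y$ and hence its Jacobian have good reduction at $\ell$. Let $\mathcal A$ be the N\'eron model of $\Jac(Y)$ over $\ZZ_\ell$, which is an abelian scheme by good reduction. Then the $\ell$-torsion $\mathcal A[\ell]$ is a finite flat commutative group scheme over $\ZZ_\ell$ whose generic fiber is $V_Y$; in particular, the ambient representation $\overline\rho_Y$ is finite flat at $\ell$, and its determinant is $\chi_\ell$ as computed in \Cref{prop:det_Hperp_H}.

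Since $H$ and $H^\perp$ are stable under the decomposition group at $\ell$ (the latter by Galois-equivariance of the Weil pairing), they correspond to closed subgroup schemes of the generic fiber of $\mathcal A[\ell]$. Taking their scheme-theoretic closures inside $\mathcal A[\ell]$ produces a nested pair of closed subgroup schemes $\widetilde H \subseteq \widetilde{H^\perp} \subseteq \mathcal A[\ell]$; these are flat over $\ZZ_\ell$ because $\ZZ_\ell$ is a DVR (so torsion-free $=$ flat) and finite because $\mathcal A[\ell]$ is. The quotient $\widetilde{H^\perp}/\widetilde H$ is then a finite flat commutative group scheme over $\ZZ_\ell$ whose generic fiber recovers $H^\perp/H$, yielding the desired finite flat model. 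Serre's recipe then gives $k(\rhoHperpH)=2$.

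The main technical point is the passage from the $\GalQ$-module $H^\perp/H$ to a finite flat model over $\ZZ_\ell$, with the quotient structure preserved. This behaves well because of the uniqueness of finite flat prolongations in the absolutely unramified setting: with $\ell\geq 3$ we have ramification index $e=1 < \ell-1$, so Raynaud's theorem applies and scheme-theoretic closures in $\mathcal A[\ell]$ are the honest finite flat subgroup schemes one wants. Once this is granted, everything else is bookkeeping: the quotient of finite flat group schemes is finite flat, and Serre's weight recipe in its weight-$2$ case is exactly the finite flat case, which was established in the modularity theorem of Khare--Wintenberger \cite{modularity}.
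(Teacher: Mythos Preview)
Your proposal is correct and follows essentially the same route as the paper: reduce via Serre's recipe \cite[Proposition~4]{serre} to showing that $H^\perp/H$ has a finite flat model over $\ZZ_\ell$, use good reduction of $\Jac(Y)$ at $\ell$ to get that $\Jac(Y)[\ell]$ is finite flat, extend $H$ and $H^\perp$ to finite flat subgroup schemes, and take the quotient. The paper's proof is terser, citing \cite[Corollary~3.3.6(1)]{raynaud} (with $e=1$) directly for the quotient step, whereas you spell out the scheme-theoretic closure construction. One small quibble: you invoke Raynaud's \emph{uniqueness} of prolongations ($e<\ell-1$), but what is actually needed is that closures of generic-fiber subgroups are flat subgroup schemes (elementary over a DVR) and that the category is closed under quotients (Raynaud's $e\leq \ell-1$ result); uniqueness is not required for the argument to go through.
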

\begin{proof}
By \cite[Prop.~4]{serre}, it suffices to show that $H^\perp/H$ is a finite $\ZZ_\ell$-group scheme.
Since $\Jac(Y)$ has good reduction modulo $\ell$, it follows that $\Jac(Y)[\ell]$ (and hence both $H$ and $H^\perp$) is a finite $\ZZ_\ell$-group scheme.
Finally, by \cite[Cor.~3.3.6 (1)]{raynaud} (with $e=1$), it follows that $H^\perp/H$ is a finite $\ZZ_\ell$-group scheme.
\end{proof}

\begin{corollary}
    \label{cor:modularity}
    Assume that $\rhoHperpH$ is irreducible. Let
    \begin{equation}
        \label{eq:weight_definition}
        k = \begin{cases}
            2 & \ell\text{ does not divide } N_Y \\
            \ell^2+1 & \ell \text{ divides } N_Y.
        \end{cases}
    \end{equation}
    Then there exists a modular form $f\in S_k(N_Y)$ modulo $\ell$ such that $\overline\rho_{f,\ell} \simeq \rhoHperpH$. 

    In particular, if $f = \sum_{n=1}^\infty c_n q^n$ is the $q$-expansion of $f$, then $c_p \equiv b_p\pmod\ell$ for all primes $p$ not dividing $N_Y$.
\end{corollary}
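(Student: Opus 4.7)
The plan is to apply Serre's modularity conjecture (proved by Khare--Wintenberger) to $\rhoHperpH$ and then push the resulting form into the ambient space $S_k(N_Y)$ prescribed by the statement. The two hypotheses of Serre's conjecture are already in place: irreducibility is the standing assumption, while oddness follows from \Cref{prop:det_Hperp_H} evaluated at complex conjugation, since $\det\rhoHperpH(c) = \chi_\ell(c) = -1$. Serre's conjecture then produces a mod-$\ell$ modular form $g$ arising from some $S_{k_0}(N_0, \varepsilon_0)$ with $\overline\rho_{g,\ell} \simeq \rhoHperpH$.

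Next, I identify the Serre invariants of $\rhoHperpH$. By the proposition preceding \Cref{thm:weight}, the level satisfies $N_0 \mid N_Y$. For the weight: when $\ell \nmid N_Y$, \Cref{thm:weight} forces $k_0 = 2$; when $\ell \mid N_Y$, I use only the general Serre bound $k_0 \in [2, \ell^2 - 1]$. The nebentypus $\varepsilon_0$ is then pinned down by the determinant identity $\varepsilon_0 \, \chi_\ell^{k_0 - 1} = \det\rhoHperpH = \chi_\ell$. To carry $g$ into $S_k(N_Y)$, I apply two standard operations that preserve $q$-expansions modulo $\ell$: the level-raising inclusion $S_{k_0}(N_0, \varepsilon_0) \hookrightarrow S_{k_0}(N_Y, \varepsilon_0)$, available because $N_0 \mid N_Y$, and multiplication by the weight-$(\ell-1)$ Eisenstein series $E_{\ell-1}$, whose mod-$\ell$ reduction is $1$ and which therefore shifts the weight by $\ell - 1$ without changing the mod-$\ell$ $q$-expansion. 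When $\ell \nmid N_Y$, $k_0 = 2$ already matches the target weight $k = 2$; when $\ell \mid N_Y$, I iterate weight-raising until the weight reaches $\ell^2 - 1 = (\ell-1)(\ell+1)$, exploiting the identity $\chi_\ell^{\ell-1} \equiv 1 \pmod \ell$ to absorb the nebentypus $\varepsilon_0$ into the weight shift and thus land with trivial nebentypus.

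The main obstacle is the bookkeeping in this last step in the case $\ell \mid N_Y$: one must verify that the pair $(k_0, \varepsilon_0)$ returned by Serre's recipe can indeed be normalized simultaneously to $(\ell^2-1, \chi_{\mathrm{triv}})$ via weight-raising together with the $\chi_\ell^{\ell-1} \equiv 1$ identity, so as to match the paper's convention $S_k(N_Y) = S_k(N_Y, \chi_{\mathrm{triv}})$. Once $f \in S_k(N_Y)$ has been arranged with $\overline\rho_{f,\ell} \simeq \rhoHperpH$, the coefficient identity $c_p \equiv b_p \pmod \ell$ for primes $p \nmid N_Y$ is immediate: $c_p$ is the trace of Frobenius at $p$ on $\overline\rho_{f,\ell}$ by Serre's conjecture, while $b_p$ is the trace of Frobenius at $p$ on $H^\perp/H$ by the definition \eqref{eq:trace_Hperp_def}.
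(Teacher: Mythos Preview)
Your approach coincides with the paper's: invoke Serre's conjecture (oddness via \Cref{prop:det_Hperp_H}, irreducibility assumed), use the level bound $N(\rhoHperpH)\mid N_Y$, appeal to \Cref{thm:weight} for weight $2$ when $\ell\nmid N_Y$, and in the $\ell\mid N_Y$ case iterate multiplication by $E_{\ell-1}$ to raise the weight. The coefficient identity $c_p\equiv b_p$ is deduced in the same way.

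One point needs correcting. Your mechanism for the nebentypus is not right: multiplication by $E_{\ell-1}$ preserves the nebentypus, so there is no ``absorbing $\varepsilon_0$ into the weight shift.'' The reason $\varepsilon_0$ is trivial---which the paper asserts in the sentence right after \Cref{prop:det_Hperp_H}---is simpler and does not involve weight-raising at all. The Serre conductor $N_0$ is coprime to $\ell$, so $\varepsilon_0$ has conductor prime to $\ell$; on the other hand your determinant identity $\varepsilon_0\chi_\ell^{k_0-1}=\chi_\ell$ gives $\varepsilon_0=\chi_\ell^{2-k_0}$, whose conductor is a power of $\ell$. Comparing, $\varepsilon_0=1$, and as a byproduct $k_0\equiv 2\pmod{\ell-1}$, which is precisely the congruence the paper invokes in its proof. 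With $\varepsilon_0$ trivial from the outset, the remaining $E_{\ell-1}$-multiplication step is exactly the paper's argument, and the ``main obstacle'' you flagged reduces to this conductor comparison rather than any interaction between weight-raising and nebentypus.
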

\begin{proof}
    If $\ell$ does not divide $N_Y$, this is immediate by \Cref{thm:weight}.
    
    Otherwise, if $\ell$ divides $N_Y$, then the (untwisted) weight of $\rhoHperpH$ is in the interval $[2,\ell^2+1]$ and is $\equiv 2\pmod{\ell-1}$.
    By von Staudt–Clausen theorem,
    $\nu_\ell(B_{\ell-1}) = -1$ (where $B_{\ell-1}$
    denotes Bernoulli numbers). Thus, the $q$-expansion of Eisenstein series $E_{\ell-1} = 1 + \frac{2(\ell-1)}{B_{\ell-1}}\sum_{n=1}^\infty \sigma_{\ell-1}(n)q^n$ 
    is congruent to $1$ modulo $\ell$.
    Thus, for any modular form $f\in S_k(N_Y)$ modulo $\ell$, the modular form $E_{\ell-1} f \in S_{k+\ell-1}(N_Y)$, whose $q$-expansion in modulo $\ell$ is the exactly the same as $f$ in modulo $\ell$.
    Thus, regardless of what the weight of $\rhoHperpH$ is, one can increment the weight by $\ell-1$ at a time without changing the coefficients modulo $\ell$.
    Hence, there is a corresponding modular form of weight $\ell^2+1$. This concludes the proof.
\end{proof}

\subsection{Finding rational \texorpdfstring{$H$}{H}}
\label{subsec:proving_H}
In this subsection, we describe a numerical algorithm that can compute the approximate coordinates of $H$, if it exists.

Recall that the Jacobian of a curve of genus $2$ over $\CC$ is isomorphic to a complex torus $\mathbb{C}^2/\Lambda$ for some four-dimensional lattice $\Lambda$.
This isomorphism can be constructed explicitly. 
To do so, recall that points on the $\Jac(Y)$ can be parametrized by $\{P_1,P_2\}\in \Sym^2 Y$.
For two chosen base points $Q_1,Q_2\in\Jac(Y)$, we define the \vocab{Abel-Jacobi map}
\begin{equation}
\begin{aligned}
\operatorname{AJ}_{Q_1,Q_2} : \Sym^2 Y &\to  H^0(Y, \Omega) / H_1(Y, \ZZ) = \CC^2 / \Lambda\\
\{P_1, P_2\} &\mapsto \left(\int_{Q_1}^{P_1} \omega_1 + \int_{Q_2}^{P_2} \omega_1,\quad \int_{Q_1}^{P_1} \omega_2 + \int_{Q_2}^{P_2} \omega_2\right),
\end{aligned}
\end{equation}
where one picks a basis for $\omega_1, \omega_2$ for differentials $H^0(Y, \Omega)$; a working basis is $\omega_1 = \tfrac{dx}y$ and $\omega_2 = \tfrac{x\,dx}y$ if $y^2=f(x)$ is a hyperelliptic model of $Y$.
This map does not depend on the choice of a path, as we are considering their values modulo $\Lambda = H_1(Y, \ZZ)$. For more details about this map, we refer the reader to \cite[\S 1.18]{milne_av}. For our purposes, we take $Q_1$ and $Q_2$ to be $\infty$ and $\infty$ or $\infty_1$ and $\infty_2$ according to whether we are using an odd model or an even model. This ensures that the set $\{P_1, P_2\}$ is Galois-invariant.

On this torus, the $\ell$-torsion is the $\ell^4$ points in $\frac{1}{\ell}\Lambda/\Lambda$. For a candidate subgroup $H$ we may now calculate coordinates and check whether it is rational as follows.

\begin{algorithm}
\label{alg:calculate_H}
\emph{Input.} A one-dimensional subgroup of the $\ell$-torsion of $\Jac(Y)$, as represented by its coordinates in $\mathbb{C}^2/\Lambda$. 

\emph{Output.} If the algorithm recognizes $H$ as Galois-stable, \texttt{true}. If the algorithm recognizes $H$ as Galois-nonstable, \texttt{false}. Otherwise, \texttt{unknown}.
\begin{enumerate}
\item Let the nonzero points in $H$ be $P_1, P_2, \ldots, P_{\ell - 1}$.
\item For each $i$ from $1$ to $\ell - 1$, pull back $P_i$ to a point on $\Sym^2 Y$. Define
\begin{equation}
    f_i(T) := \begin{cases}
        (T-x_{i,1})(T-x_{i,2}) & \text{if } P_i\mapsto \{(x_{i,1}, y_{i,1}), (x_{i,2}, y_{i,2})\} \in \Sym^2 Y \\
        T-x_i & \text{if } P_i\mapsto \{(x_i, y_i)\} \in \Sym^2 Y.
    \end{cases}
\end{equation}
\item Compute $f(T) := \prod_{i = 1}^{\ell - 1}f_i(T)$.
\item If any coordinate has a recognizably nonzero complex part, output \texttt{false}.
\item Run a rational recognition algorithm over the coefficients of $f$. If the coefficients are successfully recognized as rational numbers, then we verify that this is correct by computing coordinate of $P_1$ (which will lie in an field extension of $\QQ$ of degree at most $4(\ell-1)$)
and check that $\ell P_1=0$.
If this test passes, output \texttt{true}.
Otherwise, output \texttt{unknown}.
\end{enumerate}

\end{algorithm}

This algorithm works because if $H$ is Galois-stable, the list of $x$-coordinates collected in step 2 must be Galois-stable as well, meaning $f(T) \in \QQ[T]$.

This algorithm does not always return \texttt{true} or \texttt{false}, even if the precision is sufficiently high. 
If the precision is sufficiently high and 
the algorithm still returns \texttt{unknown},
then $H$ is probably Galois-nonstable.

\subsection{Proving Isomorphism}
\label{subsec:irreducible}
One notable property of modular forms is that
if the first few coefficients (up to the Sturm bound \cite{original_sturm_bound}) of two modular forms are congruent modulo $\ell$, then all coefficients of those modular forms are congruent.
Combining this fact with the modularity of $\rhoHperpH$
gives an algorithm to deterministically prove the isomorphism
between two mod-$\ell$ Galois representations
$\rhoHperpH$ and $\overline\rho_X$.
The algorithm mirrors \cite[Prop.~4]{semisimplification}, which is used to prove mod-$\ell$ congruences between elliptic curves. We now describe a version of this algorithm in our setting of genus $2$ curves.
\begin{algorithm}
    \label{alg:proving_isomorphism}
    \emph{Input.} 
    \begin{itemize}
   \item An elliptic curve $X$ (with conductor $N_X$),
   \item a prime $\ell\geq 3$,
    \item a genus $2$ curve $Y$ (with conductor $N_Y$), and
    \item the coordinates of a generator of one-dimensional
    $\GalQ$-stable subgroup $H\subset \Jac(Y)[\ell]$
    (given by their minimal polynomials)
    such that $\rhoHperpH$ is irreducible.
    \end{itemize}
    \emph{Output.} \texttt{true} if $\rhoHperpH\simeq\overline\rho_X$ and \texttt{false} otherwise.
    \begin{enumerate}[label=(\arabic*)]
    \item Compute the level $M$ and the Sturm's bound $B$ by
    \begin{equation}
    \begin{split}
        \label{eq:M}
        M &= \lcm\Bigg(N_X,\ N_Y \prod_{\substack{\text{prime } p \\ p\mid N_Y}} p\Bigg) \quad \text{and}  \quad
        B =\tfrac{k}{12} M \prod_{\substack{\text{prime } p \\ p\mid M}} \left(1-\tfrac 1p\right),
    \end{split}
    \end{equation}
    where $k$ is defined in \eqref{eq:weight_definition}.
    \item For each prime $p\leq B$ not dividing $N_Y$, do the following.
    \begin{itemize}
        \item Compute the action of $\Frob_p$ acting on $H$, which will be a mutiplication by some $t\in\FF_\ell^\times$. This can be done in a finite extension of $\FF_p$.
        \item Compute $b_p$ (defined in \eqref{eq:trace_Hperp_def}) by (cf. \eqref{eq:frob_trace_recover})
        $b_p = a_{p,Y} - t - \frac pt$.
        \item If $p$ does not divide $N_X$, check whether
        $b_p\equiv a_{p,X}\pmod{\ell}$.
        \item If $X$ has a multiplicative reduction modulo $p$,
        check whether $b_p a_{p,X}\equiv p+1\pmod{\ell}$.
    \end{itemize}
    Return \texttt{true} if the condition holds for all $p \leq B$ and \texttt{false} otherwise.
    \end{enumerate}
\end{algorithm}
This algorithm is only feasible
when the conductor of $Y$ is very small: if the conductor $N_Y$ is squarefree, then the algorithm requires checking at least $N_Y^2$ traces. Most genus two curves have conductor at least $1000$, so this algorithm is only feasible for curves with very small conductor.

\begin{remark}
    To certify irreducibility of $\rhoHperpH$, we do the following: first enumerate the set $\mathcal X$ of characters $\eps : \GalQ \to \FF_\ell^\times$ that has conductor dividing $D$ (where $D$ is in \Cref{prop:finite_conductor}).
    Then for each prime $p\leq B$ for which $Y$ has good reduction, eliminate any character $\eps\in \mathcal X$ such that $b_p\not\equiv \eps(p) + \frac{p}{\eps(p)}\pmod{\ell}.$
    If it happens that $\mathcal X=\emptyset$, we can conclude that $H$ is irreducible.
\end{remark}
\begin{proof}[Proof of Correctness of \Cref{alg:proving_isomorphism}]
    If $\rhoHperpH\simeq\overline\rho_X$, then by \Cref{prop:frob_traces}, the conditions in step (4) are all satisfied, so the algorithm returns \texttt{true}.
    Thus, it suffices to show that if the conditions in step (4) are all satisfied, then $\rhoHperpH\simeq\overline\rho_X$.

    By the Modularity Theorem \cite[Thm.~A]{ec_modularity}, let $\overline f\in S_2(\Gamma_0(N_X))$ be the modular form corresponding to $\rho_X$.
    By the weight-incrementing argument in the proof of \Cref{cor:modularity}, there is a modular form $f\in S_k(\Gamma_0(N_X))$ that has the same reduction modulo $\ell$ as $\overline f$.
    By \Cref{cor:modularity}, let
    $g\in S_k(\Gamma_0(N_Y))$ be the modular form corresponding to $\rho_{H^\perp/H}$.
    
    For any modular form $h$, we let $N(h)$ be the level of 
    $h$ (i.e., the smallest positive  integer such that 
    $h\in S_k(\Gamma_0(N(h)))$).
    Recall that the corresponding L-function of $h$ is 
    $L_h(s) = \sum_{n\geq 1} c_n n^{-s}.$
    Define
    $F_{p,h}(T) = \sum_{n=0}^\infty c_{p^n} T^n.$
    If $h$ is an eigenform, then by properties of
    Hecke operators (see \cite[Thm.~5.9.2]{diamond_shurman}),
    then we have
    \begin{equation}
        L_h(s) = \prod_{p\text{ prime}} 
        F_{p,h}(T),\quad F_{p,h}(T) 
        = \begin{cases}
            (1 - c_pT)^{-1} & \text{if }p\mid N \\
            (1 - c_pT + pT^2)^{-1} & \text{if }p\nmid N.
        \end{cases}
    \end{equation}
    
    Let $R_d : S_k(\Gamma_0(N)) \to S_k(\Gamma_0(Nd))$ denote the operator that takes a function $h$ to another function $\tau\mapsto h(d\tau)$. Note that the L-function corresponding to $R_dh$ is $d^{-s} L_h(s)$. 
    Thus, for any polynomial $P$, the L-function corresponding to $P(R_d)h$ is $P(d^{-s}) L_h(s)$.
    By comparing individual L-factors, we have
    \begin{align}
        L_{P(R_p)h} &= \prod_{q\text{ prime}} F_{q,P(R_p)h}(T),\quad \text{and} \\
        \label{eq:R_p}
        F_{q, P(R_p) h}(T)  &=
        \begin{cases}
            F_{q,h}(T) & \text{if }q\neq p \\
            P(T)\cdot F_{p,h}(T) & \text{if }q=p
        \end{cases}.
    \end{align}
    For each prime $p\leq B$, we define operators $U_p$
    that will be applied to $f$ and $V_p$ that will be applied to $g$ so that 
    \begin{enumerate}[label=(\roman*)]
    \item the L-factors of $U_pf$ and $V_pg$ at $p$ are equal,
    \item the L-factors of $U_pf$ and $V_pg$ at all primes 
    $q\neq p$ remain unchanged, and
    \item for any prime $p$, $\nu_p(U_pf)$ and $\nu_p(V_pg)$ are at most $\nu_p(M)$.
    \end{enumerate}
    ($U_p$ and $V_p$ can depend on $f$ and $g$.)
    We define those by splitting into four cases.
    \begin{enumerate}
    \item \textbf{If $p$ divides $N_Y$,}
    then we define
    $$U_p = F_{p,f}(R_p)\quad\text{and}\quad V_p = F_{p,g}(R_p),$$
    so by applying \eqref{eq:R_p} twice, we have 
    \begin{align*}
    F_{p, U_pf}(T) &= F_{p,f}(T)\cdot F_{p,f}(T)^{-1} = 1, \\
    F_{p,V_pg}(T)
    &=F_{p,g}(T)\cdot F_{p,g}(T)^{-1} =1,
    \end{align*}
    verifying (i).
    Moreover, $\deg F_{p,f} = \max(2-\nu_p(N_X), 0)$ by considering reduction types of $X$,
    so 
    $$\nu_p(N(U_pf)) \leq \nu_p(N_X) + \deg F_{p,f} = \max(2, \nu_p(N_X)).$$
    Similarly, $\deg F_{p,g} = 1$, so we have
    $$\nu_p(N(V_pg)) \leq \nu_p(N_Y)+1.$$
    Therefore, both $\nu_p(N(U_pf))$ and $\nu_p(N(V_pg))$
    are at most $\nu_p(M)$, verifying (ii).
    \item \textbf{If $p$ does not divide either $N_X$ or $N_Y$.} 
    Then we have $F_{p,f}(T) = (1-a_{p,X}T + pT^2)^{-1}$ and $F_{p,g}(T) = (1-b_pT + pT^2)^{-1}$, which are automatically congruent modulo $\ell$. 
    Thus, we can define $U_p=V_p=1$, so the levels of $U_pf$ and $V_pg$ are not divisible by $p$.
    Both (i) and (ii) are then satisfied.
    \item \textbf{If $p$ does not divide $N_Y$ and $\nu_p(N_X)=1$,}
    then we have
    $F_{p,f}(T) = (1-a_{p,X}T)^{-1}$ and $F_{p,g}(T) = (1-b_pT + T^2)^{-1}$.
    The fact that $a_{p,X}=\pm 1$, and the $a_{p,X} b_p\equiv p+1\pmod{\ell}$
    implies that $1-a_{p,X}T$ divides $1-b_pT + pT^2$
    in $\FF_\ell[T]$.
    Let the quotient be $1-cT$ for $c\in\FF_\ell$. Then we define
    $$U_p = 1\quad\text{and}\quad V_p = 1-cR_p,$$
    so that by \eqref{eq:R_p}, we have 
    $F_{p,U_pf}(T) = G_{p, U_pf}(T) = (1-a_{p,X}T)^{-1}$,
    verifying (i).
    Moreover, $\nu_p(N(U_pf)) = \nu_p(N_X) = 1$
    and $\nu_p(N(V_p g)) \leq 1$,
    verifying (ii).
    \item \textbf{If $p$ does not divide $N_Y$ and $\nu_p(N_X)\geq 2$,}
    then the L-factors at $p$ of $f$ and $g$ are
    $1$ and $(1-b_pT+T^2)^{-1}$. Thus, we define
    $$U_p = 1\quad\text{and}\quad V_p = 1-b_pR_p+R_p^2,$$
    so from \eqref{eq:R_p}, we have $F_{p,U_pf}(T) = 
    F_{p, U_pg} = 1$, verifying (i).
    Moreover, $\nu_p(N(U_pf)) = \nu_p(N_X)$
    and $\nu_p(N(U_pg)) \leq 2$, both are at most $\nu_p(M)$.
    \end{enumerate}
    Now, let $U = \prod_{p\leq B} U_p$ and $V=\prod_{p\leq B} V_p$.
    Hence, we have that 
    \begin{itemize}
        \item For all primes $p\leq B$, the L-functions corresponding to $Uf$ and $Vg$ can be expressed as an Euler product, and
        the L-factors at $p$ of $Uf$ and $Vg$ are equal.
        \item Both $Uf$ and $Vg$ are in $S_k(\Gamma_0(M))$ for $M$ defined in \eqref{eq:M}.
    \end{itemize} 
    Thus, the coefficients of $1,q, q^2, \dots,q^B$ in the $q$-expansions of the modular forms $Uf$ and $Vg$ are congruent modulo $\ell$.
    Hence, by Sturm's theorem \cite[Thm.~1]{original_sturm_bound}, we get that $Uf$ and $Vg$ are congruent modulo $\ell$.
    This means that $a_p\equiv b_p\pmod \ell$ for \emph{every} prime $p\nmid M$.
    
    Thus, for any prime $p$, the matrices $\rhoHperpH(\Frob_p)$ and $\overline\rho_X(\Frob_p)$ are conjugate.
    By the Chebotarev's density theorem, for any $\sigma\in\Gal(\overline{\QQ}/\QQ)$,
    one can select $p$ such that $\sigma$ and $\Frob_p$ are conjugate,
    and so $\rhoHperpH(\sigma)$ and $\overline\rho_X(\sigma)$
    are conjugate for any $\sigma\in\Gal(\overline{\QQ}/\QQ)$.
    Thus, by the Brauer–Nesbitt theorem, $\rhoHperpH$ and $\overline\rho_X$ are isomorphic up to semisimplification. However, since we assumed $\rhoHperpH$ is irreducible, it follows that $\rhoHperpH\simeq\rho_X$.
\end{proof}

\subsection{The Reducible Case}
\label{subsec:reducible_case}
If that $\rhoHperpH$ is reducible, we cannot use the algorithm above because $\rhoHperpH$ is not necessarily modular. 
Even if $\rhoHperpH$ were modular, we can only use \Cref{alg:proving_isomorphism} to show that $\rhoHperpH$ and $\rho_X$ are isomorphic up to semisimplification.
An algorithm using modular polynomials similar to \cite[\S 3.6.]{global_symplectic_type} does not extend well to genus $2$ curves due to the unwieldy nature of modular polynomials in genus $2$.
One might need to resort to explicitly computing $\rhoHperpH$.

\section{Checking the Antisymplectic Condition}
\label{sec:symplectic_type}
Throughout this section, let $X$ and $Y$ be curves of genus $1$ and genus $2$, respectively, and let $\ell$ be a prime such that
\begin{itemize}
    \item there is a Galois-stable $1$-dimensional subspace $H\subseteq \Jac(Y)[\ell]$; and
    \item there is a $\GalQ$-module isomorphism $\phi : H^\perp/H \to \Jac(X)[\ell]$.
\end{itemize}
Then the only condition left to verify before we can glue $X$ and $Y$ is that $\phi$ is antisymplectic, i.e., for any $P,Q\in H^\perp/H$, we have $e_\ell(\phi(P), \phi(Q)) = e_\ell(P,Q)^{-1}$. If $\ell=2$, then this condition is tautological. Hence, we assume $\ell$ is odd for the remainder of this section.

If the image of the mod-$\ell$ representation $\rhoHperpH : \GalQ \to \GL(H^\perp/H)$ is sufficiently nice, we may still be able to determine whether this condition holds even before trying to glue those curves. We adapt the method in \cite{local_symplectic_type} to do so.
\subsection{Symplectic Type}

If $\phi : H^\perp/H \to \Jac(X)[\ell]$ is a Galois module isomorphism, then there exists a constant $\alpha\in\FF_\ell^\times$ such that for any $P,Q\in H^\perp/H$, we have 
$e_\ell(\phi(P), \phi(Q)) = e_\ell(P,Q)^\alpha$
because $\alpha$ is simply the determinant of $\phi$
under a suitable basis.
Note that if we replace $\phi$ by $[t]\circ\phi$ where $t\in\ZZ$ (which is still a Galois module isomorphism), then $\alpha$ becomes $\alpha t^2$. 

We thus define the \vocab{symplectic type} of $\phi$ to be the image of $\alpha$ in $\FF_\ell^\times / (\FF_\ell^\times)^2 \simeq \{\pm 1\}$. If the image is $+1$, then $\phi$ has \vocab{positive symplectic type}, and if the image is $-1$, then $\phi$ has \vocab{negative symplectic type}.\footnote{\cite{local_symplectic_type} and \cite{global_symplectic_type} call positive and negative symplectic type symplectic and antisymplectic isomorphism, respectively. We choose a different terminology to avoid confusion.} 
This terminology is not to be confused with symplectic and antisymplectic.
\begin{proposition}
    \label{prop:positive_negative_symplectic}
    Suppose $\ell\equiv 1\pmod 4$ (resp. $\ell\equiv 3\pmod 4$).
    Then there exists an antisymplectic $\GalQ$-module isomorphism $\phi : H^\perp/H\to \Jac(X)[\ell]$ if and only if there exists a $\GalQ$-module isomorphism $\psi : H^\perp/H \to \Jac(X)[\ell]$  of positive (resp. negative) symplectic type.
\end{proposition}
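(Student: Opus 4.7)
The plan is to unpack the two definitions and reduce everything to whether $-1$ is a square in $\FF_\ell^\times$. By definition, an isomorphism $\phi$ is antisymplectic precisely when the constant $\alpha \in \FF_\ell^\times$ associated to it via $e_\ell(\phi(P),\phi(Q)) = e_\ell(P,Q)^\alpha$ equals exactly $-1$. On the other hand, $\psi$ has positive (resp.\ negative) symplectic type precisely when its constant $\alpha$ is a square (resp.\ non-square) in $\FF_\ell^\times$. So the question reduces to: given a Galois module isomorphism $\psi$ with constant $\alpha$, when can we modify it to obtain the exact value $-1$?

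The bridge between the ``value in $\FF_\ell^\times$'' and the ``class in $\FF_\ell^\times/(\FF_\ell^\times)^2$'' is the observation already recorded above the definition of symplectic type: replacing $\psi$ by $[t]\circ\psi$ multiplies the constant by $t^2$. Thus from $\psi$ with constant $\alpha$, one obtains an antisymplectic isomorphism precisely when $\alpha t^2 = -1$ is solvable in $\FF_\ell^\times$, i.e., when $-\alpha^{-1}$ (equivalently $-\alpha$, since $\alpha^{-1}$ and $\alpha$ differ by the square $\alpha^{-2}$) is a square.

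Now I invoke the elementary fact that $-1 \in (\FF_\ell^\times)^2$ if and only if $\ell \equiv 1 \pmod 4$. If $\ell \equiv 1 \pmod 4$, then ``$-\alpha$ is a square'' is equivalent to ``$\alpha$ is a square'', i.e., $\psi$ has positive symplectic type. If $\ell \equiv 3\pmod 4$, then ``$-\alpha$ is a square'' is equivalent to ``$\alpha$ is a non-square'', i.e., $\psi$ has negative symplectic type. The converse implication is automatic: if $\phi$ is antisymplectic then its constant is $-1$, whose class in $\FF_\ell^\times/(\FF_\ell^\times)^2$ is the trivial class when $\ell \equiv 1\pmod 4$ and the nontrivial class when $\ell \equiv 3 \pmod 4$, so $\phi$ itself witnesses the desired positive or negative symplectic type. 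I do not expect any serious obstacle: the entire argument is a short bookkeeping exercise once one separates the ``exact-value'' nature of antisymplecticity from the ``up-to-squares'' nature of symplectic type, and invokes the classical criterion for $-1$ to be a square modulo $\ell$.
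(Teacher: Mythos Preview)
Your argument is correct and follows the same approach as the paper, which simply states that both statements follow from the fact that $-1\in(\FF_\ell^\times)^2$ if and only if $\ell\equiv 1\pmod 4$; you have merely spelled out the bookkeeping behind this one-line proof.
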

\begin{proof}
    Both facts follow from: $-1\in\mathbb (\FF_\ell^\times)^2$ if and only if $\ell\equiv 1\pmod 4$.
\end{proof}
From \Cref{prop:positive_negative_symplectic}, it suffices to determine whether a $\GalQ$-module isomorphism we have is of positive or negative symplectic type. It is possible that there is a $\GalQ$-module isomorphism of positive symplectic type and a $\GalQ$-module isomorphism of negative symplectic type at the same time. By \cite[Thm.~15]{local_symplectic_type}, this happens if and only if the image of $\bmod\,\ell$-representation $\overline\rho_{X,\ell}(\GalQ)$ is abelian and not the subgroup generated by a conjugate of $\ttwomatrix a10a \in \GL_2(\FF_\ell)$ for some $a\in\FF_\ell^\times$.
\subsection{A Local Test for Symplectic Type}
\label{subsec:symplectic_test_description}
We do not have a general algorithm for determining the symplectic type given $H^\perp/H$ and $X$. However, if there exists $\sigma\in\GalQ$ which acts on $\Jac(X)[\ell]$ by a non-diagonalizable matrix (i.e., conjugate of $\ttwomatrix a10a$ for some $a\in\FF_\ell^\times$), then this element $\sigma$ could be used to determine the symplectic type, as detailed in the following proposition, which is a variant of \cite[Thm.~16]{local_symplectic_type}.
\begin{proposition}
    \label{prop:symplectic_condition}
    Let $\langle\bullet,\bullet\rangle : \FF_\ell^2\times\FF_\ell^2 \to \FF_\ell$ be a non-degenerate alternating bilinear pairing. Let $M\in\GL_2(\FF_\ell)$ be a non-diagonalizable matrix. Then as $v$ varies through $\FF_\ell^2$, then $\langle v, Mv\rangle$ does not depend on $v$ up to multiplication by a square.
\end{proposition}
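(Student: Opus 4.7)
The plan is to first reduce to the case where $M$ is nilpotent by subtracting off its scalar part, and then to do a direct computation in a Jordan basis. Since $M \in \GL_2(\FF_\ell)$ is non-diagonalizable, it has a unique repeated eigenvalue $a \in \FF_\ell^\times$, so I may write $M = aI + N$ with $N \neq 0$ satisfying $N^2 = 0$. Because the pairing is alternating, $\langle v, v\rangle = 0$ for every $v$, and therefore $\langle v, Mv\rangle = a \langle v, v\rangle + \langle v, Nv\rangle = \langle v, Nv\rangle$. It thus suffices to prove the statement with $M$ replaced by $N$.

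Next I would pick a Jordan basis $\{e_1, e_2\}$ for $N$, i.e., one with $Ne_1 = 0$ and $Ne_2 = e_1$. Writing $v = xe_1 + ye_2$, we have $Nv = ye_1$, so
\begin{equation}
    \langle v, Nv\rangle \;=\; \langle xe_1 + ye_2,\; ye_1\rangle \;=\; xy\,\langle e_1, e_1\rangle + y^2\,\langle e_2, e_1\rangle \;=\; c\,y^2,
\end{equation}
where $c := \langle e_2, e_1\rangle \in \FF_\ell$. This expression is either identically zero (when $c = 0$) or, for $y \neq 0$, lies in the square class of $c$; in both cases the conclusion of the proposition holds.

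I do not expect a genuine obstacle: once the reduction to the nilpotent part is made, the result is essentially a one-line Jordan form calculation, and the conclusion is manifestly basis-independent since it only depends on the square class in $\FF_\ell^\times/(\FF_\ell^\times)^2$. As a small bonus observation, the non-degeneracy hypothesis actually forces $c \neq 0$: if $\langle e_2, e_1\rangle = 0$, then combined with $\langle e_1, e_1\rangle = 0$ (alternating), the vector $e_1$ would lie in the radical of the pairing, contradicting non-degeneracy. So for non-degenerate pairings the first alternative never occurs, and one obtains the sharper statement that $\langle v, Mv\rangle$ always takes a single nonzero value in $\FF_\ell^\times/(\FF_\ell^\times)^2$ whenever it is nonzero.
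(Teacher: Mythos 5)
Your proof is correct and follows essentially the same route as the paper: a Jordan-basis computation showing that $\langle v, Mv\rangle$ is a fixed constant times $y^2$. The only cosmetic differences are that the paper normalizes the alternating form to the standard one $xy'-yx'$ (rather than keeping a general constant $c=\langle e_2,e_1\rangle$ and invoking non-degeneracy), whereas you first strip off the scalar part $aI$ using $\langle v,v\rangle=0$; both reductions rely, as the paper does, on reading ``non-diagonalizable'' as conjugate to $\left(\begin{smallmatrix} a & 1 \\ 0 & a\end{smallmatrix}\right)$.
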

\begin{proof}
    Without loss of generality, change the basis so that $M = \ttwomatrix a10a$ and, scale the inner product by a constant so that $\left\langle \ttwovector{x}{y}, 
    \left(\begin{smallmatrix} x' \\[-2pt] y' \end{smallmatrix}\right)
    \right\rangle = xy'-yx'$.
    Thus, $\langle v, Mv\rangle = -y^2$, where $v = \ttwovector xy$.
\end{proof}
In the case of the $\ell$-torsion of an elliptic curve $X$, if there exists $\sigma\in\GalQ$ acting on $\Jac(X)[\ell]$ by a non-diagonalizable matrix, then by \Cref{prop:symplectic_condition}, the Weil pairing $e_\ell(P, \sigma(P))$ is either $1$ or does not depend on $P\in\Jac(X)[\ell]$ (up to multiplication by a square in $\FF_\ell^\times$).

Similarly, $\sigma$ acts on $H^\perp/H$ by the same matrix, so by \Cref{prop:symplectic_condition} again, the Weil pairing $e_\ell(Q,\sigma(Q))$ is either $1$ or does not depend on $Q\in H^\perp/H$ (up to multiplication by a square in $\FF_\ell^\times$). If $P = \phi(Q)$, then
\begin{equation}
\label{eq:symplectic_type_relation}
e_\ell(P,\sigma(P)) = e_\ell(\phi(Q), \sigma(\phi(Q)))
= e_\ell(\phi(Q), \phi(\sigma(Q))) = e_\ell(Q,\sigma(Q))^\alpha,
\end{equation}
where $\alpha$ is the symplectic type. Thus, comparing the nontrivial values of $e_\ell(P,\sigma(P))$ and $e_\ell(Q,\sigma(Q))$ for any $P\in\Jac(X)[\ell]$ and $Q\in H^\perp/H$ determines the symplectic type.

To compute this, we note that, by the Chebotarev's density theorem, there exists a prime $p$ for which $\Frob_p$ and $\sigma$ are in the same conjugacy class.
Thus, we may take the reduction of both curves modulo $p$ and consider their equations in $\overline{\FF}_p$.
The following \Cref{lem:small_field} shows that all torsion points are contained in a field extension of degree only $O(\ell^2)$.
\begin{lemma}
\label{lem:small_field}
Suppose that $p$ is a prime such that the action of $\Frob_p$ on $\Jac(X)[\ell]$ (and hence on $H^\perp/H$) is a non-diagonalizable matrix. Then we have
\begin{align}
\Jac(X)_{\overline{\FF}_p}[\ell] &= \Jac(X)_{\FF_{p^{\ell(\ell-1)}}}[\ell] \quad\text{and}\quad  \\
\Jac(Y)_{\overline{\FF}_p}[\ell] &= 
\begin{cases}
\Jac(Y)_{\FF_{p^{\ell(\ell-1)}}}[\ell] & \text{if }\ell\neq 3 \\
\Jac(Y)_{\FF_{p^{18}}}[\ell] & \text{if }\ell = 3 \\
\end{cases}
\end{align}
\end{lemma}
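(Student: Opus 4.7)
The plan is to translate the statement into a question about the order of $\Frob_p$ acting on $\ell$-torsion: for an abelian variety $A/\FF_p$, one has $A(\overline{\FF}_p)[\ell] = A(\FF_{p^n})[\ell]$ if and only if $\Frob_p^n$ acts trivially on $A[\ell]$, so it suffices to bound the orders of $\Frob_p$ acting on the $\FF_\ell$-vector spaces $\Jac(X)[\ell]$ and $\Jac(Y)[\ell]$.

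For $\Jac(X)[\ell]$, the hypothesis says the image of $\Frob_p$ in $\GL_2(\FF_\ell)$ is non-diagonalizable, so after a basis change it equals $\ttwomatrix{a}{1}{0}{a}$ for some $a\in\FF_\ell^\times$. Writing this as $a(I+N)$ with $N^2=0$ and invoking the characteristic-$\ell$ identity $(I+N)^\ell = I + N^\ell = I$, its order equals $\ell\cdot\text{ord}(a)$, which divides $\ell(\ell-1)$.

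For $\Jac(Y)[\ell]$, I would exploit the $\GalQ$-stable filtration $0\subset H\subset H^\perp\subset \Jac(Y)[\ell]$, on whose successive quotients $\Frob_p$ acts respectively as the scalar $\chi(p)$, as the non-diagonalizable matrix conjugate to $\ttwomatrix{a}{1}{0}{a}$, and as the scalar $p/\chi(p)$. Choosing a basis compatible with the filtration realizes $\Frob_p$ as an upper-triangular matrix $M\in\GL_4(\FF_\ell)$ with diagonal $(\chi(p),a,a,p/\chi(p))$. The Jordan--Chevalley decomposition gives commuting $M=M_sM_u$ with $M_s$ semisimple of order dividing $\ell-1$ (its eigenvalues lie in $\FF_\ell^\times$), and $M_u = I+N$ unipotent. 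Using $(I+N)^{\ell^k} = I+N^{\ell^k}$, the order of $M_u$ equals $\ell^k$ for the smallest $k$ with $\ell^k$ at least the nilpotency index of $N$, which is bounded above by $4 = \dim \Jac(Y)[\ell]$.

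For $\ell\geq 5$, that nilpotency index is at most $4\leq \ell$, so $\text{ord}(M_u)\mid \ell$ and hence $\text{ord}(M)\mid \ell(\ell-1)$. For $\ell=3$ a Jordan block of size at most $3$ still gives $\text{ord}(M_u)\mid 3$, but a block of size $4$, which can arise only when $\chi(p) = p/\chi(p) = a$, forces $\text{ord}(M_u) = 9$; combined with the semisimple order dividing $2$ this yields $\text{ord}(M)\mid 18$. The main point requiring care is confirming that a size-$4$ block is genuinely possible for $\ell=3$, so that the exponent $18$ in the statement is not loose; this is verified by a short direct computation of $(M-aI)^3$ for an upper-triangular $M$ of the above form, whose $(1,4)$-entry comes out to be a product of two off-diagonal entries and is generically nonzero.
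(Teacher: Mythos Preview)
Your proof is correct and follows essentially the same strategy as the paper: reduce to bounding the order of $\Frob_p$ on the $\ell$-torsion, observe that all eigenvalues lie in $\FF_\ell^\times$ (via the filtration $0\subset H\subset H^\perp\subset \Jac(Y)[\ell]$), and then control the unipotent contribution via the characteristic-$\ell$ identity $(I+N)^{\ell}=I+N^{\ell}$, with the single exception at $\ell=3$ when a size-$4$ Jordan block can occur. The only packaging difference is that you invoke the multiplicative Jordan--Chevalley decomposition $M=M_sM_u$ explicitly, whereas the paper argues directly that $(\Frob_p)^{\ell-1}-1$ is nilpotent of index at most $4$ and then applies $(A-1)^\ell=A^\ell-1$; these are the same computation. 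Your closing paragraph on sharpness (that a size-$4$ block genuinely occurs for $\ell=3$) is not needed for the lemma as stated, which only asserts containment, but it does explain why the $\ell=3$ case is singled out.
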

\begin{proof}
    Suppose that the action of $\Frob_p$ on $\Jac(X)_{\overline{\FF}_p}[\ell]$ is conjugate to $\ttwomatrix\gamma 1 0 \gamma$ for some $\gamma\in\FF_\ell$.
    We compute the order of $\Frob_p$ in both torsion fields.
    \begin{itemize}
    \item For $X$, by the condition, $\Frob_p$ is conjugate to $\ttwomatrix \gamma 1 0 \gamma$. The $n$-th power of this is $\ttwomatrix{\gamma^n}{n\gamma^{n-1}}{0}{\gamma^n}$, which is congruent modulo $\ell$ to the identity matrix when $n=\ell(\ell-1)$.
    \item For $Y$, we have that $\Frob_p$ must act on $\Jac(Y)_{\overline{\FF}_p}[\ell]$ by matrix with eigenvalues $\alpha$, $\beta$, $\gamma$, $\gamma$, all in $\FF_\ell$, where $\alpha$ and $\beta$
    are the eigenvalues corresponding to $H$ and $\Jac(Y)_{\overline{\FF}_p}[\ell]/H^\perp$.
    In particular, we deduce that $(\Frob_p)^{\ell-1} - 1$ has all eigenvalues $0$, and hence is a nilpotent matrix. 
    
    In particular, if no Jordan block of $\Frob_p$ has size greater than $\ell$, then
    \begin{equation}
        0 = \big((\Frob_p)^{\ell-1} - 1\big)^\ell = (\Frob_p)^{\ell(\ell-1)} - 1,
    \end{equation}
    where the second equality holds because we are working in modulo $\ell$. 
    The only case that the previous sentence does not cover is when $k=4$ and $\ell=3$ (i.e., $\Frob_p$ is a single Jordan block), in which case the order is $18$.
    \end{itemize}
    Thus, every element in $\Jac(X)_{\overline{\FF}_p}[\ell]$ and $\Jac(Y)_{\overline{\FF}_p}[\ell]$ is fixed by $(\Frob_p)^{\ell(\ell-1)}$ (or $(\Frob_p)^{18}$ for $\Jac(Y)_{\overline{\FF}_p}[\ell]$ and $\ell=3$), completing the proof.
\end{proof}
\begin{remark}
    In the version of \Cref{alg:symplectic} given below, we will consider only the case in which $\alpha$ and $\beta$ are both distinct from $\gamma$, in which case one needs to consider only $\FF_{p^{\ell(\ell-1)}}$ even for $\ell=3$.
\end{remark}
\subsection{Algorithm for Determining Symplectic Type}
With all the tools developed in \Cref{subsec:symplectic_test_description}, we now describe an algorithm to determine the symplectic type on some of the curves.
\begin{algorithm}
\label{alg:symplectic}
\emph{Input.} Two curves $X$ and $Y$ of genus $1$ and genus $2$ for which a one-dimensional Galois-stable $H\subseteq\Jac(Y)[\ell]$ exists and there exists a $\GalQ$-module isomorphism $\phi : H^\perp/H \to \Jac(X)[\ell]$.

\emph{Output.} Either \texttt{positive} or \texttt{negative} symplectic type of $\phi$ or \texttt{fail}, which occurs if the image $\rhoHperpH$ does not contain a non-diagonalizable matrix.

\begin{enumerate}
    \item Find a prime $p$ such that $a_{X,p}^2 \equiv 4p\pmod\ell$. Then check that the matrix $\Frob_p|_{\Jac(X)[\ell]}$ is not diagonalizable by checking that the order of $\Frob_p|_{\Jac(X)[\ell]}$ does not divide $\ell-1$.

    If one cannot find $p$ after a sufficiently many trials, then the image of Galois representation likely does not have a non-diagonalizable element, so return \texttt{fail}.

    Once we find $p$, consider curves $X$ and $Y$ over $\FF_{p^{\ell(\ell-1)}}$.
    \item Pick a random point $P\in\Jac(X)[\ell]$ and then compute $w_1 := e_\ell(P, \Frob_p(P))$. Repeat until this result is not $1$.
    \item Determine the characteristic polynomial of $\Frob_p | _{\Jac(Y)[\ell]}$. Write it in the form $(T-\alpha)(T-\beta)(T-\gamma)^2\in\FF_\ell[T]$ such that $(T-\gamma)^2$ is the characteristic polynomial of $\Frob_p  |_{\Jac(X)[\ell]}$.

    If $\alpha=\beta=\gamma$, repeat (1) again with larger primes.
    \item Pick a random point $R\in\Jac(Y)[\ell]$. Compute $Q = (\Frob_p-\alpha)(\Frob_p-\beta)(R)$. Then compute $w_2 := e_\ell(Q,\Frob_p(Q))$. Repeat until this result is not $1$.
    \item Return \texttt{positive} if $w_1 = w_2^{t^2}$ for some $t\in\FF_\ell^\times$, \texttt{negative} otherwise.
\end{enumerate}
\end{algorithm}
\begin{remark}
    In step (1), by the Chebotaraev density theorem, 
    the density of such $p$ is at least 
    $(\ell-1)/|\GL_2(\FF_\ell)| = \Omega(1/\ell^3)$.
    Since $\ell$ is generally small (practically, $\ell\leq 19$), it is not difficult 
    to obtain $p$ by trying the first few primes.
\end{remark}
\begin{remark}
    One can show that the probability that each attempt of both steps (2) and (4) fails is $\tfrac 1{\ell}$. (For step (4), note that $\alpha\beta = \gamma^2 = p$, so both $\alpha$ and $\beta$ are distinct from $\gamma$.)
\end{remark}
\begin{proposition}
    \Cref{alg:symplectic} correctly determines the symplectic type (if it does not return \texttt{fail}).
\end{proposition}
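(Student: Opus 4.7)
The plan is to walk through steps (1)--(5) of \Cref{alg:symplectic} and verify that each one implements the strategy of \Cref{subsec:symplectic_test_description}. In step (1), I would first observe that the discriminant of the mod-$\ell$ characteristic polynomial $T^2 - (\operatorname{tr}\Frob_p)T + p$ of $\Frob_p$ acting on $\Jac(X)[\ell]$ is $(\operatorname{tr}\Frob_p)^2 - 4p$, so the condition $(\operatorname{tr}\Frob_p)^2 \equiv 4p\pmod{\ell}$ forces a repeated eigenvalue $\gamma\in\FF_\ell$. A short computation shows that any non-scalar matrix of the form $\gamma I + N$ with $N\neq 0$ nilpotent has order divisible by $\ell$, while the scalar $\gamma I$ has order dividing $\ell-1$; thus the order check in step (1) exactly isolates the non-diagonalizable case. \Cref{prop:symplectic_condition} then applies: as $P$ ranges over $\Jac(X)[\ell]$, the value $e_\ell(P,\Frob_p P)$ is either trivial or lies in a fixed nontrivial coset of the squares in $\mu_\ell$, and the resampling in step (2) returns a representative $w_1$ of this coset.

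For step (3) I would study the action on $\Jac(Y)[\ell]$ through the $\Frob_p$-stable filtration $0\subseteq H\subseteq H^\perp\subseteq\Jac(Y)[\ell]$, whose graded pieces $H$, $H^\perp/H\cong\Jac(X)[\ell]$, and $\Jac(Y)[\ell]/H^\perp$ contribute eigenvalues $\alpha$, a double $\gamma$, and $\beta$ to the characteristic polynomial (with $\alpha\beta=p=\gamma^2$ coming from Galois-equivariance of the Weil pairing). Under the assumption $\alpha,\beta\neq\gamma$ enforced by the algorithm, the Chinese remainder theorem for $\FF_\ell[\Frob_p]$ decomposes $\Jac(Y)[\ell]$ as a direct sum $V_{\alpha,\beta}\oplus V_\gamma$. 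Since $\gamma$ is not an eigenvalue on $H$ or on $\Jac(Y)[\ell]/H^\perp$, I would conclude $V_\gamma\cap H = 0$ and $V_\gamma\subseteq H^\perp$, so the natural map $V_\gamma\to H^\perp/H$ is an $\FF_\ell[\Frob_p]$-isomorphism. Consequently $(\Frob_p-\alpha)(\Frob_p-\beta)$ annihilates $V_{\alpha,\beta}$ and acts invertibly on $V_\gamma$, so in step (4) the element $Q=(\Frob_p-\alpha)(\Frob_p-\beta)R$ runs over $V_\gamma$ as $R$ varies; a second application of \Cref{prop:symplectic_condition} (now for the non-diagonalizable action on $V_\gamma$) shows that the resampled $w_2=e_\ell(Q,\Frob_p Q)$ represents the corresponding nontrivial coset of squares for $H^\perp/H$.

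Step (5) then follows from relation \labelcref{eq:symplectic_type_relation}. Writing $\alpha_*\in\FF_\ell^\times$ for the scalar defining the symplectic type of $\phi$ and setting $Q':=\phi^{-1}(P)\in H^\perp/H$, relation \labelcref{eq:symplectic_type_relation} gives $w_1 = e_\ell(Q',\Frob_p Q')^{\alpha_*}$, while the coset-of-squares property applied to $Q$ and $Q'$ yields $e_\ell(Q',\Frob_p Q') = w_2^{s^2}$ for some $s\in\FF_\ell^\times$. Combining, $w_1 = w_2^{\alpha_* s^2}$. Since $w_2\neq 1$ generates $\mu_\ell$, the equation $w_1=w_2^{t^2}$ is solvable for $t\in\FF_\ell^\times$ exactly when $\alpha_*$ is a square in $\FF_\ell^\times$, i.e., when $\phi$ has positive symplectic type. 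The main obstacle I anticipate is cleanly identifying $V_\gamma$ with $H^\perp/H$ as $\FF_\ell[\Frob_p]$-modules under the various non-coincidence assumptions on $\alpha,\beta,\gamma$; the rest is bookkeeping of the symplectic constant through the Weil pairing.
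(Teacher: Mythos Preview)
Your argument is correct and reaches the same conclusion as the paper, but the technical route to the key step differs slightly. The paper's proof reduces everything to the single claim $Q\in H^\perp$, which it verifies by identifying $H^\perp$ with $\ker\,(\Frob_p-\alpha)(\Frob_p-\gamma)^2$ (a three-dimensional space by Jordan form) and then observing that $(\Frob_p-\alpha)(\Frob_p-\gamma)^2 Q$ vanishes by Cayley--Hamilton. You instead take the primary decomposition $\Jac(Y)[\ell]=V_{\alpha,\beta}\oplus V_\gamma$ and argue that $V_\gamma\hookrightarrow H^\perp/H$ is an $\FF_\ell[\Frob_p]$-isomorphism, so that $Q=(\Frob_p-\alpha)(\Frob_p-\beta)R$ lands in $V_\gamma\subseteq H^\perp$. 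Your route is marginally longer but buys a cleaner conceptual picture: it exhibits $V_\gamma$ as a canonical $\Frob_p$-equivariant lift of $H^\perp/H$ inside $\Jac(Y)[\ell]$, which makes the compatibility of the two Weil pairings and the non-diagonalizability on $V_\gamma$ transparent. The paper's version is terser because once $Q\in H^\perp$ is known, the induced pairing on $H^\perp/H$ agrees with the ambient pairing by definition, and everything else is already handled by \labelcref{eq:symplectic_type_relation}.
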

\begin{proof}
From \labelcref{eq:symplectic_type_relation},
it suffices to show that $Q\in H^\perp$.
Assume without loss of generality that $\Frob_p$ acts on $H$ by multiplication by $\alpha$. Let 
$$ V = \operatorname{Ker}(\Frob_p-\beta)\\(\Frob_p-\gamma)^2.$$
Since $V$ is a span of the three columns corresponding to $\beta$, $\gamma$, $\gamma$ in the Jordan block decomposition, we deduce that $\dim V = 3$.
Moreover, the action of $\Frob_p$ on $H^\perp$ has characteristic polynomial $(T-\beta)(T-\gamma)^2$, so we have $H^\perp\subseteq V$. Comparing dimensions gives $H^\perp = V$. Finally, %we note that
\begin{equation}
    (\Frob_p-\beta)(\Frob_p-\gamma)^2 Q
= (\Frob_p-\alpha)(\Frob_p-\beta)^2(\Frob_p-\gamma)^2 R = 0,
\end{equation}
so $Q\in H^\perp$ as desired.
\end{proof}
This algorithm is generally able to handle $\ell\in\{3,5,7\}$ in a few seconds. 

\section{Gluing Curves}
\label{sec:gluing_algorithm}
Given a genus 2 curve $Y$, we wish to find a prime $\ell$ and a genus 1 curve $X$ to which it can be glued, and then compute the resulting gluing. In this section, we will give a concrete description of our workflow for finding $X$ and computing the gluing. For simplicity, we only look for curves $Y$ such that the Galois representation corresponding to $H^\perp/H$ is irreducible. We then give a demonstration of how the steps pan out on a particular curve.

Our workflow starts with a genus 2 curve $Y$. Because attempting to compute gluings is time-consuming, we first narrow down the possibilities for $\ell$ and $X$ as described in sections \Cref{subsec:determine_ell,subsec:determine_X}. We then attempt to analytically construct gluings for each of the remaining possibilities in \Cref{subsec:computing_gluing}. The implementation of the workflow described in this section is available at \cite{our_code}.

\subsection{Determining \texorpdfstring{$\ell$}{ℓ}}
\label{subsec:determine_ell}
As specified by \Cref{thm-cond:H-stability} we must find $\ell$ for which some $H \subset \Jac(Y)[\ell]$ is Galois-stable. We use \Cref{alg:H_test} to find all possible $\ell$ for which this holds, possibly along with some spurious $\ell$ which do not work.

\subsection{Determining \texorpdfstring{$X$}{X}}
\label{subsec:determine_X}
Once $\ell$ is restricted to some finite set, we fix some $\ell$. We assume that $H$ as in \Cref{thm-cond:H-stability} exists. We wish to find $X$ such that some $\psi$ as in \Cref{thm-cond:phi-equivariance} exists. 

Each of the $\GalQ$-stable subgroups $H$ corresponds to a trace function that takes a prime number $p$ and output $b_p$, the trace of $H^\perp/H$. We use \Cref{alg:frob_trace_recover} to recover all possible trace functions. (If it reports no trace function, this means that no such $H$ exists.) For each such test function $p\mapsto b_p$, we query the LMFDB all elliptic curves $X$ over $\QQ$ that satisfies the conditions of \Cref{prop:frob_traces} for all primes $p\leq 100$. This leaves us with a list of potential elliptic curves. 

Next, given a potential $X$ and $Y$, one can rule out most cases where $\phi$ has the wrong symplectic type by applying \Cref{alg:symplectic}.

Once we have at least one elliptic curve $X$ for which there exists an antisymplectic $\GalQ$-module isomorphism $\phi: H^\perp/H \to \Jac(X)[\ell]$, it is possible to characterize all of them. Indeed, we note that for any elliptic curve $X'$, a symplectic $\GalQ$-module isomorphism $\psi: \Jac(X)[\ell] \to \Jac(X')[\ell]$ gives an antisymplectic $\GalQ$-module isomorphism $\phi' = \psi \circ \phi$. Further, all antisymplectic $\GalQ$-module isomorphisms $\phi': H^\perp/H \to \Jac(X')[\ell]$ arise in this manner, specifically from $\psi = \phi' \circ \phi^{-1}$.

Thus, given such an $X$, the problem of finding all such $X'$ is reduced to finding an elliptic curve $\ell$-congruent to $X$ with positive symplectic type. When $\ell\in\{2,3,5\}$, the moduli space of all such $X'$ is a curve of genus $0$, which has been worked out in \cite{ec_gluable_family_2} and \cite{ec_gluable_family_3_5}. When $\ell\geq 7$, the moduli space of $X'$ is a curve of genus at least $3$. By Falting's theorem, any curve of genus greater than $1$ has finitely many rational points. Thus, for $\ell\geq 7$, there will be only finitely many such $X'$ defined over $\QQ$. Still, the equation for the curve of all possible $X'$ has been worked out for $\ell\in\{7,11\}$ in \cite{ec_gluable_family_7_11}.

\begin{remark}[The reducible case]
    \label{rmk:reducible}
If the Galois representation corresponding to $H^\perp/H$ is reducible, following the above steps above is not enough to filter the list of elliptic curves and reduce it to a list of manageable size. For example, starting with the curve \href{https://www.lmfdb.org/Genus2Curve/Q/961/a/961/1}{\texttt{961.a.961.1}} in the LMFDB and $\ell=5$ leaves $3083$ potential elliptic curves. This is because Frobenius traces can only prove that $\overline\rho_X$ and $\rhoHperpH$ are isomorphic up to semisimplification.

There are two possible strategies that we can use to narrow this list down further. Suppose that $X$ is an elliptic curve such that $\overline\rho_X\simeq\rhoHperpH$.
\begin{enumerate}
\item \textbf{Discriminant.} Suppose that $p$ is a prime not dividing $N_Y$ such that $X$ has a multiplicative reduction modulo $p$. Thus, the Galois representation $\overline\rho_X$ should be unramified at $p$, and so by using the theory of Tate's curve, one can deduce that $\ell$ divides $\nu_p(\Delta_{\min}(X))$ (where $\Delta_{\min}$ denotes the minimal discriminant).
Thus, we can rule out a large number of potential elliptic curves for which this condition does not hold.
\item \textbf{Diagonal Matrix.} Let $p$ be a prime such that $\overline\rho_Y(\Frob_p)$ is a diagonal matrix in $\GSp_4(\FF_\ell)$, then $\overline\rho_X(\Frob_p)$ is also a diagonal matrix in $\GL_2(\FF_\ell)$. 
Thus, one can find such primes $p$ and use them to rule of the elliptic curves further.
\end{enumerate}
Running these two strategies on the curve \href{https://www.lmfdb.org/Genus2Curve/Q/961/a/961/1}{\texttt{961.a.961.1}} (with $12$ primes $1301$, $2351$, $4211$, $5171$, $16001$, $17881$, $24371$, $31181$, $35531$, $36451$, $37361$, and $45751$ in Step (2)) reduces the number of candidates to only 9. However, this does not work very well on some other curves, especially with $\ell=3$, due to the sheer number of candidates and the rarity of primes $p$ in Step (2).
\end{remark}

\subsection{Computing a Gluing}
\label{subsec:computing_gluing}
The computation of gluing is based on the numerical algorithm in \cite[\S 2.1]{1_plus_2_2_torsion_gluing} and the code from \cite{gluing_code}. 
However, we made several optimizations.
First, in the step of computing the curve invariants from lattice (Step (4) in \Cref{alg:fast_gluing} below), we replace the original theta function algorithm with a faster implementation from FLINT \cite{fast_theta}.
Second, the original code repeatedly tests a random maximal isotropic subgroup of $\Jac(X)[\ell]\times\Jac(Y)[\ell]$.
However, this proves to be inefficient since there are $\Theta(\ell^6)$ such subgroups \cite[Cor.~1.20]{1_plus_2_2_torsion_gluing}.

To optimize this, we propose the following two-step approach to search for the desired subgroup.
\begin{enumerate}
    \item Determine which one-dimensional subgroups $H$ are rational.
    \item Determine which antisymplectic isomorphisms $\phi : \Jac(X)[\ell]\to H^\perp/H$ give rise to a gluing.
\end{enumerate}
This gives the following algorithm.
\begin{algorithm}
\label{alg:fast_gluing}
\emph{Input.} An elliptic curve $X$, a genus $2$ curve $Y$, and a prime $\ell$.

\emph{Output.} A (possibly empty) list of all genus $3$ curves $Z$ defined over $\QQ$ such that $\Jac(Z) \sim (\Jac X\times\Jac Y)/G$ for some maximal isotropic subgroup $G\subset \Jac(X)[\ell]\times\Jac(Y)[\ell]$.

\begin{enumerate}
    \item Compute the period matrix of $Y$, giving a basis $\{P_1,P_2,P_3,P_4\}$ of the lattice $\Lambda$ such that $\Jac(Y)\simeq \CC^2/\Lambda$. 
    \item For each ratio $(a_1:a_2:a_3:a_4)\in\mathbb P^3(\FF_\ell)$, test whether the subgroup $H$ generated by the torsion point $\tfrac{a_1}\ell P_1 + \tfrac{a_2}\ell P_2 + \tfrac{a_3}\ell P_3 + \tfrac{a_4}\ell P_4$ is $\GalQ$-stable by using \Cref{alg:calculate_H}.
    
    \item 
    For each $H$ determined to be $\GalQ$-stable in (2), enumerate all antisymplectic isomorphisms $\phi : \Jac(X)[\ell]\to H^\perp/H$.
    
    \item For each such $\phi$, use $(\phi, H)$ to generate the subgroup $G$ according to \cite[Prop.~1.18]{1_plus_2_2_torsion_gluing}.
    Compute a period matrix of the lattice $\Jac(X)\times\Jac(Y)/G$.
    \item Compute its Diximier--Ohno invariants \cite{quartic_reconstruction} or Shioda invariants \cite{hyperelliptic_reconstruction} and test whether they are defined over $\QQ$ or not.
    \item For any set of rational invariants, reconstruct the curve $Z$ using \cite[Alg.~2.21]{1_plus_2_2_torsion_gluing} for plane quartics or \cite{hyperelliptic_reconstruction} for hyperelliptic curves.
\end{enumerate}
\end{algorithm}
In Step (2), there are $\#\mathbb P^3(\FF_\ell) = \ell^3 + \ell^2 + \ell + 1 = \Theta(\ell^3)$ possible $H$'s to check. In Step (3), there are $\#\SL_2(\FF_\ell) = \ell(\ell^2-1) = \Theta(\ell^3)$ isomorphisms to check.
Thus, this algorithm reduces checking $\Theta(\ell^6)$ subgroups to checking at most $\Theta(\ell^3)$ torsion points and isomorphisms, making it much more efficient

We implemented this algorithm in Magma V.2.28-16. The timing of this algorithm against simply enumerating all maximal isotropic subgroups is shown in \Cref{table:gluing_fast_vs_slow}. The timing was taken on CPU 12th Gen Intel i9-12900K (24) @5.100GHz on five different test cases in which one can find a gluing when working with $500$ decimal digits.
Note that the timing only measures time to compute the (Dixmier-Ohno or Shioda) invariants and does not include time to reconstruct the curve.
\begin{table}[ht]
\begin{tabular}{c|ccc}
& $\ell=3$ & $\ell=5$ & $\ell=7$ \\ \hline
Enumerate all subgroups & 144.5 s & 5760 s & {\small (not attempted)} \\
Using \Cref{alg:fast_gluing} & 11.8 s & 71.9 s & 241 s
\end{tabular}
\caption{Time to compute invariants of all possible gluings $500$ digits.}
\label{table:gluing_fast_vs_slow}
\end{table}

\subsection{Example}
We provide a rundown of our algorithms on a particular curve.
\begin{example}
    Let $Y$ be the genus $2$ curve $y^2 + (x^3 + x^2 + x + 1)y = -x^2 - x$ (\href{https://www.lmfdb.org/Genus2Curve/Q/277/a/277/1}{\texttt{277.a.277.1}} in the LMFDB). Running \Cref{alg:H_test} on this curve yields the result $\{3, 5\}$, meaning that the only possible choices of $\ell$ are $3$ and $5$. From now, suppose we are looking for a $(5,5)$-gluing, i.e., $\ell=5$.
\end{example}
    We next run \Cref{alg:frob_trace_recover} on the input $(Y, 5)$. Since the conductor of $Y$ is $277$, which is prime, the algorithm concludes immediately that $\chi$ must be trivial, i.e., $\chi(p)=1$ for all $p$. (This is also reflected by the fact that $Y$ has a rational $5$-torsion subgroup.)
    Thus, we may compute the Frobenius traces $b_p$ of $H^\perp/H$ from the Frobenius traces $a_{p,Y}$ of $Y$. For example,
    \begin{equation}
    \begin{split}
    F_{Y,13}(T) &= T^4 - 3T^3 + 7T^2 - 39 T + 169,\\
    a_{Y,13} &= 3,\quad \text{and}
    \quad b_{13} = 3 - 1 - \tfrac{13}1 = 4\pmod 5.
    \end{split}
    \end{equation}
    
    We can repeat the above process to compute $b_p$ for all primes $p\leq 100$. Then we search for all elliptic curves in the LMFDB satisfying the trace constraints detailed in \Cref{prop:frob_traces}.
    The result is the four curves shown in \Cref{table:gluable_curves}.
    \begin{table}[ht]
    \centering
    \begin{tabular}{cccc}
        & LMFDB & Equation & Symplectic Type \\ \hline
        $X_1$ & 
        \href{https://www.lmfdb.org/EllipticCurve/Q/1939/b/1}{\texttt{1939.b1}}
         & $y^2+y=x^3-1916x-32281$
        & positive\\
        $X_2$
        & \href{https://www.lmfdb.org/EllipticCurve/Q/18559/a/1}{\texttt{18559.a1}}
        & $y^2+y=x^3+11734x-21208$ 
        & negative \\
        $X_3$
        &\href{https://www.lmfdb.org/EllipticCurve/Q/21883/b/1}{\texttt{21883.b1}}
        & $y^2+y=x^3-86x-44420$ 
        & positive \\
        $X_4$
        & \href{https://www.lmfdb.org/EllipticCurve/Q/32963/c/1}{\texttt{32963.c1}}
        & $y^2+y=x^3-77866x+8364065$ 
        & positive \\
    \end{tabular}
    \caption{Potential elliptic curves gluable to curve \href{https://www.lmfdb.org/Genus2Curve/Q/277/a/277/1}{\texttt{277.a.277.1}}}
        \label{table:gluable_curves}
    \end{table}
    
    We run the symplectic test in \Cref{alg:symplectic} to test the antisymplectic condition. Both curves can be tested using $p=19$, and the results are shown in \Cref{table:gluable_curves}. Since $\ell\equiv 1\pmod 4$, by \Cref{prop:positive_negative_symplectic}, the symplectic test rules out $X_2$

    Thus, our algorithm found the curves $X_1$, $X_3$, and $X_4$. This is \emph{not} a proof that these curves are gluable to $Y$. We can either use \Cref{alg:proving_isomorphism} or computing the gluing explicitly to prove that they actually form a Galois stable maximal isotropic subgroup $G$.

    For $i\in\{1,3,4\}$, running the code referenced in \Cref{subsec:computing_gluing} on $X_i$ and $Y$ shows that there are indeed gluings $Z_i$.
    The invariants of $Z_1$ and $Z_3$ was obtained by computing at $500$-digit precision, while the invariants of $Z_4$ was obtained at $1000$-digit precision.
    It took about 30 seconds to compute the coordinates a nonzero point in $H$
    and less than 60 seconds to compute the invariants for each of $Z_1$, $Z_3$, and $Z_4$.
    However, by far dominating every computation we have done is minimizing the equations of $Z_4$, which took just over an hour.
    The minimized equations of $Z_1$, $Z_3$, and $Z_4$ are given below.
    \begin{gather*}
    \medmath{
    \begin{split}
        Z_1: 88189 x^4 &- 398531 x^3 y + 7700 x^3 z - 678120 x^2 y^2 
        + 1444780 x^2 y z + 231034 x^2 z^2  \\
        &+ 238603 x y^3 - 1620885 x y^2 z  
        - 218291 x y z^2 - 420855 x z^3 + 82587 y^4   \\
        &- 2912900 y^3 z + 333537 y^2 z^2 - 959874 y z^3 - 281678 z^4 = 0
    \end{split}
    } \\[4pt]
    \medmath{
    \begin{split}
        Z_3: y^2 = 448x^8 &+ 3584x^7 + 2016x^6 - 476x^5 \\
        &- 13020x^4 - 16408x^3 - 18340x^2 - 8988x - 4025
    \end{split}
    } \\[4pt]
    \medmath{
        \begin{split}
        Z_4:  19351616 x^4 &+ 136748535 x^3 y + 106394158 x^3 z  - 235515177 x^2 y^2 \\
        &- 46043175 x^2 y z + 67674485 x^2 z^2 
        - 549641282 x y^3  + 36999650 x y^2 z \\
        &- 160500711 x y z^2 
        - 36439076 x z^3 + 272167382 y^4 + 488584945 y^3 z \\
        &- 488728851 y^2 z^2 + 152950443 y z^3 - 115190535 z^4 = 0
        \end{split}
    }
    \end{gather*}
\section{Examples}
\label{sec:examples}
We now report on some examples resulting from the search process in \Cref{sec:gluing_algorithm}. As in \Cref{subsec:computing_gluing}, the amount of time to compute a gluing is defined to be the time taken to compute Diximier-Ohno invariants or Shioda invariants. 
It does not include time to reconstruct or minimize the curve.
All timings were done on CPU 12th Gen Intel i9-12900K (24) @5.100GHz.
\subsection{Gluing Along Large Torsion}
Our work allows us to look for gluings along $\ell$-torsion for larger $\ell$.

We first note that for $\ell\in\{7,11,13,17,19,37,43,67,163\}$ 
(i.e., all $\ell\geq 7$ for which there exists an $\ell$-isogeny of elliptic curves, by Mazur's isogeny theorem \cite[Thm.~1]{mazur_isogeny}),
one can construct infinitely many gluable candidates of genus $1$ and genus $2$ curves.
Here, a \vocab{gluable candidate} is a pair of curves $(X,Y)$ such that $X$ has genus $1$, $Y$ has genus $2$, and there exists a Galois stable maximal isotropic subgroup $G\subset \Jac(X)[\ell]\times\Jac(Y)[\ell]$.
As noted in \Cref{rmk:decomposable_jacobian}, a gluable candidate does not necessary produce a gluing.

There is an easy (and uninteresting) way to construct those gluable candidates.
We will construct pairs of elliptic curve $X$ and genus $2$ curve $Y$ such that
\begin{itemize}
    \item $Y$ is isogenous to a product of two elliptic curves $Y_1\times Y_2$;
    \item $Y_1$ has an  $\ell$-isogeny; and 
    \item $\Jac(X)[\ell]$ and $\Jac(Y_2)[\ell]$ are isomorphic as $\GalQ$-modules.
\end{itemize}

Call a gluable candidate $(X,Y)$ \vocab{uninteresting} if it is of the above form and \vocab{interesting} otherwise.

\begin{proposition}
    \label{prop:unintersting_gluing}
    For $\ell\in\{7,11,13,17,19,37,43,67,163\}$, 
    there exists infinitely many uninteresting gluable candidates $(X,Y)$ along $\ell$-torsion.
\end{proposition}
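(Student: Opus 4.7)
The plan is to construct uninteresting gluable candidates using Mazur's isogeny theorem together with classical $(2,2)$-gluing. For each $\ell$ in the list, Mazur's theorem provides an elliptic curve $E/\QQ$ with a rational $\ell$-isogeny, giving a $\GalQ$-stable subgroup $K\subset E[\ell]$ of order $\ell$. Set $Y_1 := E$. For infinitely many elliptic curves $Y_2/\QQ$ that are $2$-congruent to $Y_1$ (these are parameterized by a twist of the genus-$0$ modular curve $X(2)$, and the subset giving indecomposable gluings is cofinite), the Howe--Lepr\'evost--Poonen construction yields a smooth genus-$2$ curve $Y/\QQ$ with $\Jac(Y)$ $(2,2)$-isogenous to $Y_1\times Y_2$.

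Since the $(2,2)$-isogeny has degree $4$ coprime to $\ell$, it induces a $\GalQ$-equivariant isomorphism $\Jac(Y)[\ell]\simeq Y_1[\ell]\oplus Y_2[\ell]$ that respects the Weil pairing up to a unit scalar, with the two summands mutually orthogonal. Taking $H := K\subset Y_1[\ell]\subset \Jac(Y)[\ell]$ gives a $\GalQ$-stable isotropic line with $H^\perp/H\simeq Y_2[\ell]$ as Galois modules with Weil pairing. To complete the data of a gluable candidate, I need an elliptic curve $X/\QQ$ together with an antisymplectic $\GalQ$-equivariant isomorphism $\phi\colon X[\ell]\to H^\perp/H$. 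When $\ell\equiv 1\pmod 4$, I take $X := Y_2$ and $\phi := t\cdot \mathrm{id}$ for some $t\in\FF_\ell$ with $t^2\equiv -1\pmod \ell$; this is antisymplectic since rescaling the identity by $t$ multiplies the Weil-pairing constant by $t^2 = -1$. When $\ell\equiv 3\pmod 4$, no scalar rescaling of the identity can be antisymplectic, so $X$ must be chosen as a separate elliptic curve antisymplectically $\ell$-congruent to $Y_2$.

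Once a single candidate $(X_0, Y_0)$ has been produced, infinitely many follow by quadratic twisting. For each squarefree $d\in\ZZ$, the simultaneous twist $(X_0^d, Y_0^d)$ is again an uninteresting gluable candidate: quadratic twisting preserves $\ell$-isogeny subgroups, $(2,2)$-gluing relations, and $\GalQ$-equivariant isomorphisms on $\ell$-torsion (both sides tensored by the same quadratic character $\chi_d$), while the Weil pairing is unaffected, so the antisymplectic type is preserved. Distinct squarefree $d$ give pairwise non-isomorphic pairs, producing the required infinite family.

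The main obstacle is the $\ell\equiv 3\pmod 4$ subcase, where one must exhibit even one elliptic curve $X\neq Y_2$ whose $\ell$-torsion is antisymplectically isomorphic to $Y_2[\ell]$. For $\ell\in\{7,11\}$ this is handled by Fisher's explicit parameterizations of $\ell$-congruent pairs of elliptic curves, which already produce infinite families of antisymplectically $\ell$-congruent pairs. For $\ell\in\{19,43,67,163\}$ only finitely many $j$-invariants admit rational $\ell$-isogenies, but for exactly these primes the corresponding elliptic curves have complex multiplication by an order in which $\ell$ is ramified; one can then take $Y_2$ from this CM family and use the abundance of Galois-equivariant automorphisms of $Y_2[\ell]$ in the CM setting to adjust the symplectic type and construct the required $X$ as an $\ell$-isogenous partner or a twist thereof.
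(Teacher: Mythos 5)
Your construction follows the same skeleton as the paper's (glue an elliptic curve $Y_1$ carrying a rational $\ell$-isogeny to an auxiliary curve $Y_2$ along small torsion, take $H$ to be the image of the isogeny kernel, and take $X$ essentially equal to $Y_2$), but there is a genuine gap in the symplectic bookkeeping. The $(2,2)$-isogeny $\phi\colon Y_1\times Y_2\to\Jac(Y)$ does \emph{not} identify $H^\perp/H$ with $Y_2[\ell]$ ``with Weil pairing'': the principal polarization of $\Jac(Y)$ pulls back to \emph{twice} the product polarization, so the induced identification scales the $\ell$-Weil pairing by $2$. Hence taking $X=Y_2$ with a scalar map $t\cdot\mathrm{id}$ is antisymplectic iff $2t^2\equiv-1\pmod\ell$, i.e.\ iff $-2$ is a square mod $\ell$ ($\ell\equiv 1,3\pmod 8$), not iff $\ell\equiv 1\pmod 4$. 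Concretely, your ``easy'' case breaks for $\ell=13$ and $\ell=37$ (where $-2$ is a nonresidue), while your ``hard'' case $\ell\equiv 3\pmod 4$ is actually unnecessary for $\ell\in\{11,19,43,67,163\}$, where $\ell\equiv 3\pmod 8$ and the factor of $2$ you dropped supplies exactly the needed sign. Moreover, your fallback for $\ell\in\{19,43,67,163\}$ is unsound as stated: it is $Y_1$, not $Y_2$, that is forced to have CM, and a generic $2$-congruent $Y_2$ has surjective mod-$\ell$ image, so the only $\GalQ$-equivariant automorphisms of $Y_2[\ell]$ are scalars and cannot ``adjust the symplectic type.''

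For comparison, the paper avoids all of this by choosing the auxiliary gluing prime $p$ according to the quadratic character: $p=2$ when $\ell\equiv 1,3\pmod 8$ and $p=3$ when $\ell\in\{7,13,37\}$ (where $-3$ is a square mod $\ell$), taking $Y_2$ antisymplectically $p$-congruent to $Y_1$ (infinitely many such $Y_2$ exist for $p=2$ via the $2$-torsion field and for $p=3$ via the Hessian family), so that the scaling by $p$ of the $\ell$-Weil pairing makes the identification $Y_2[\ell]\simeq H^\perp/H$ antisymplectic in every case, with $X=Y_2$ and no appeal to Fisher's congruence families or CM. To repair your argument you would need either to adopt this choice of $p$ for $\ell\in\{7,13,37\}$, or to genuinely produce, for those $\ell$, infinitely many pairs $(X,Y_2)$ that are $\ell$-congruent with the precise symplectic type dictated by the factor $2$ — which is exactly the nontrivial input your sketch leaves unsupplied.
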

\begin{proof}
    Let $p=2$ if $\ell\equiv 1\text{ or }3\pmod 8$ and $p=3$ if $\ell \in \{7, 13, 37\}$.
    Let $Y_1$ be an elliptic curve with $\ell$-isogeny,
    and let $Y_2$ be an elliptic curve such that $Y_2[p]$ and $Y_1[p]$ are antisymplectically isomorphic as $\GalQ$-modules.
    There are infinitely many such $Y_2$ because
    when $p=2$, one can take elliptic curves
    whose $2$-torsion field is isomorphic to that of $Y_1$,
    and when $p=3$, this follows from \cite[\S 13]{ec_hessian}.
    
    The curves $Y_1$ and $Y_2$ are gluable along $p$-torsion, producing a curve $Y$ 
    for which there exists a $p$-isogeny 
    $\phi : Y_1\times Y_2\to \Jac(Y)$.
    Since $\ell\neq p$, the map $\phi$ induces an isomorphism
    $\phi : \Jac(Y_1)[\ell]\times \Jac(Y_2)[\ell] \stackrel\sim\to \Jac(Y)[\ell]$. This isomorphism is antisymplectic because 
    it has degree $p$ (so by properties of Weil pairing,
    $e_\ell(\phi(P),\phi(Q)) = e_\ell(P,Q)^p$), and by our choice of $\ell$, the number $-p$ is a quadratic residue modulo $\ell$.

Let $X$ be any elliptic curve such that $\Jac(X)[\ell]$ and $\Jac(Y_2)[\ell]$ are isomorphic as $\GalQ$-modules.
In particular, we may take $X=Y_2$.
We now show that $X$ and $Y$ are gluable.
\begin{itemize}
\item Since $Y_1$ has an $\ell$-isogeny, there exists a one-dimensional subgroup $G\subset \Jac(Y_1)[\ell]$.
Then the image $H := \phi(G\times\{0\})$ is a one-dimensional subgroup of $\Jac(Y)$.
\item By Galois equivariance of the Weil pairing, we have $H^\perp/H = \phi(\{0\}\times Y_2)$. Since $\phi$ is antisymplectic, $H^\perp/H$ and $\Jac(Y_2)[\ell]$ are antisymplectic as $\GalQ$-modules. \qedhere
\end{itemize}
\end{proof}
We thus look for interesting gluable candidates. For $\ell\geq 11$, we considered all genus $1$ and $2$ curves in the LMFDB and concluded that there are no interesting gluable candidates in the LMFDB. 
Therefore, we use a larger dataset of genus $2$ curves provided by Sutherland \cite{genus2_extended_db} to obtain the following examples. We check that the gluing is interesting by computing the geometric endomorphism algebra of $Y$ using \cite{endomorphism_ring}.
\begin{example}[Interesting gluing along $13$-torsion]
\label{ex:13_gluing}
Let $X$ be an elliptic curve $y^2+y=x^3+x^2-208x-1256$ (\href{https://www.lmfdb.org/EllipticCurve/Q/75/a/1}{\texttt{75.a1}} in the LMFDB). Let $Y$ be a genus $2$ curve $y^2 + x^3y = -5x^4 + 45x^2 + 9x$, which has conductor $151\,875$ and minimal discriminant $2\,883\,251\,953\,125$.
Our code computes a gluing along $13$-torsion of $X$ and $Y$ in 24 minutes under with $1000$-digit precision, and minimizing the model gives
%$y^2 = 1008x^8 - 4032x^7 + 336x^6 + 8064x^5  + 9660x^4 - 4914x^3 - 7434x^2 - 2478x + 2058$
$42 y^2 = 24 x^8 - 96 x^7 + 8 x^6 + 192 x^5 + 230 x^4 - 117 x^3 - 177 x^2 - 59 x + 49.$
\end{example}
\begin{example}[Interesting gluing along $11$-torsion]
Let $X$ be an elliptic curve $y^2+xy=x^3+9096x+224832$ (\href{https://www.lmfdb.org/EllipticCurve/Q/966/k/1}{\texttt{966.k1}} in the LMFDB). Let $Y$ be a genus $2$ curve $y^2 + (x^2+x)y = x^6 - 3x^5 + 9x^4 - 5x^3 + 12x^2 - 6x$. The resulting gluing when $\ell=11$ is
$$
\medmath{
    \begin{split}
    Z : 39753 x^4 &  + 89236 x^3 y  - 76006 x^3 z  - 3537 x^2 y^2  - 469 x^2 y z   + 46697 x^2 z^2  
    -2200 x y^3   + 42003 x y^2 z \\   
    &+ 29597 x y z^2  - 58478 x z^3 - 4883 y^4  - 12000 y^3 z  - 9287 y^2 z^2  - 398 y z^3   + 6544 z^4 = 0.
    \end{split}
}$$
\end{example}

\subsection{Curves with Interesting Geometric Endomorphism Rings}
\label{subsec:endomorphism}
For any abelian variety $A$, the \vocab{geometric endomorphism ring} $\End(A_{\Qbar})$ is the ring of all endomorphisms $A\to A$ defined over $\Qbar$. For any curve $C$, its geometric endomorphism ring is defined as the geometric endomorphism ring of its Jacobian $\End(C_{\Qbar}) := \End(\Jac C_{\Qbar})$. The \vocab{geometric endomorphism algebra} of a curve $C$ is defined as $\End(C_{\Qbar})\otimes_\ZZ\QQ$.
If $Z$ is a gluing of $X$ and $Y$, then the geometric endomorphism algebra of $Z$ can be easily determined by the geometric endomorphism algebra of $X$ and $Y$.
Furthermore, one can verify the endomorphism algebra by the code in \cite{endomorphism_ring}.

\begin{example}
    Let $X$ be the elliptic curve $y^2=x^3+x^2-3x+1$ (\href{https://www.lmfdb.org/EllipticCurve/Q/256/a/2}{\texttt{256.a2}} in the LMFDB).
    Let $Y$ be the genus $2$ curve $y^2 + y = 6x^5 + 9x^4 - x^3 - 3x^2$ (\href{https://www.lmfdb.org/Genus2Curve/Q/20736/l/373248/1}{\texttt{20736.l.373248.1}} in the LMFDB).

    There are two gluings between $X$ and $Y$ with $\ell=3$, whose minimized equations are given below
    \begin{gather*}
    \medmath{
        Z : y^2 = -210x^7 - 630x^6 + 245x^5 + 1155x^4 - 70x^3 - 700x^2 + 140
    }
    \\
        \medmath{
        \begin{split}
        Z' :
            8 x^4 &- 26 x^3 y + 26 x^2 y^2 + 32 x y^3 - 3 y^4 + 29 x^3 z - 26 x^2 y z + 234 x y^2 z \\
            &- 26 y^3 z + 153 x^2 z^2 + 22 x y z^2 + 17 y^2 z^2 + 65 x z^3 - 250 y z^3 + 25 z^4 = 0
        \end{split}
        }
    \end{gather*}
    One can compute $\End(X_{\Qbar}) \otimes_\ZZ\QQ \simeq \QQ[\sqrt{-2}]$ 
    and $\End(Y_{\Qbar})\otimes_\ZZ\QQ \simeq B_{2,3}$,
    so 
    $$
    \End(Z_{\Qbar})\otimes_\ZZ\QQ \simeq \End(Z'_{\Qbar})\otimes_\ZZ\QQ\simeq \QQ[\sqrt{-2}] \times B_{2,3},$$
    where $B_{2,3}$ is the unique quaternion algebra over $\QQ$ ramified at $2$ and $3$.
\end{example}

% \begin{example}
%     Let $X$ be the elliptic curve $y^2+xy=x^3-x^2-107x+552$ (\href{https://www.lmfdb.org/EllipticCurve/Q/49/a/2}{\texttt{49.a2}} in the LMFDB) and $Y$ be the genus $2$ curve $y^2 + (x^2 + x)y = x^5 + x^4 + 2x^3 + x^2 + x$ (\href{https://www.lmfdb.org/Genus2Curve/Q/686/a/686/1}{\texttt{686.a.686.1}} in the LMFDB).
%     A gluing of $X$ and $Y$ with $\ell=3$ is given by a quartic
%     \begin{equation}
%     Z: 7x^4 + 28x^2z^2 + 24xy^2z + 7y^4 - 4z^4 = 0.\end{equation}
%     The Jacobian $\Jac(Y)$ is isogenous to a product of two elliptic curves, one of which is $X$, and the other is $X'$ in isogeny class \href{https://www.lmfdb.org/EllipticCurve/Q/14/a}{\texttt{14.a}} in the LMFDB. Therefore, $\Jac(Z) \sim X^2 \times X'$.
%     Since are 
%     $\End(X_{\Qbar})\otimes_\ZZ\QQ \simeq \QQ(\sqrt{-7})$ and $\End(X'_{\Qbar})\otimes_\ZZ\QQ \simeq \QQ$, we have
%     \begin{equation}\End(Z_{\Qbar})\otimes_\ZZ\QQ \simeq \operatorname{Mat}_2(\QQ(\sqrt{-7})) \times \QQ.
%     \end{equation}
% \end{example}

\subsection{Gluing Curves in the LMFDB}

We have applied our algorithms to every genus 2 curve $Y$ in the LMFDB to identify triples $(X, Y, \ell)$ such that $X$ and $Y$ are potentially gluable along $\ell$-torsion where one of the following criteria are met:
\begin{itemize}
    \item $\ell = 3$ and $H^\perp/H$ is an irreducible Galois module. (If $H^\perp/H$ is reducible, our filtering methods described in \Cref{rmk:reducible} sult in a large number of false positives due to sheer number of candidate pairs of curves $(X,Y)$.) Our methods identified $3009$ genus 2 curves $Y$ for which at least one candidate $X$ exists.
    \item $\ell \ge 5$. We have identified $704$ pairs of $(Y,\ell)$ for which at least one candidate $X$ exists.
    The distribution by $\ell$ is shown in \Cref{table:gluable_l_stats}.
    All gluings for $\ell\geq 11$ are uninteresting in the sense of \Cref{prop:unintersting_gluing}. Of these pairs, $44$ curve $Y$'s have $H^\perp/H$ reducible, all of which comes from $\ell=5$.
\end{itemize}
\begin{table}[ht]
\begin{tabular}{c|ccccccc}
    $\ell$ & $5$ & $7$ & $11$ & $13$ & $19$ & $37$ & $67$ \\ \hline
    Number of $Y$'s & $649$ & $35$ & $11$ & $2$ & $4$ & $2$ & $1$
\end{tabular}
\caption{Number of $Y$ for which there exists at least one candidate of gluable elliptic curves along $\ell$-torsion.}
\label{table:gluable_l_stats}
\end{table}
For every such $(Y, \ell)$ such that $H^\perp/H$ is irreducible and we can run the symplectic test, we attempt to glue $Y$ to the candidate $X$ which has minimal conductor. For $\ell = 3$ we attempt this even when we cannot run the symplectic test.
The result is shown in \Cref{table:gluing_results}.
In all but two cases that we did not find a gluing, the Jacobian of curve $Y$ splits, in which believe that the cause of failure comes from that the condition (ii) of \Cref{thm:gluability_converse} does not hold (cf. \Cref{rmk:decomposable_jacobian}). One further case comes from a pair with the wrong symplectic type, but the symplectic test did not produce a result.
The only other case that we did not find a gluing is when $\ell=3$, $Y$ is the genus $2$ curve labeled \href{https://www.lmfdb.org/Genus2Curve/Q/471900/a/943800/1}{\texttt{471900.a.943800.1}} in the LMFDB, and $X$ is the elliptic curve \href{https://www.lmfdb.org/EllipticCurve/Q/298800/ff/1}{\texttt{298800.ff.1}} in the LMFDB. 
In this case, the Galois representations $H^\perp/H$ and $X[3]$ are not isomorphic because the former is unramified at $83$ but the latter is not.

The output of our gluings can be found in \cite{our_gluing_results}.
\begin{table}[ht]
\begin{tabular}{c|ccccccc}
    & $\ell=3$ & $\ell=5$ & $\ell=7$ \\ \hline
    \begin{tabular}{c}
        $H^\perp/H$ irreducible and \\
        (for $\ell \geq 5$) passes symplectic test 
    \end{tabular}
    & 3009 & 595  & 32 \\ \hline
    Successful gluing &  2575 & 536 & 19 \\
    Gives an error & 5 & 9 & 1 \\
    Did not find gluing & 429 & 50 & 12
\end{tabular}
\caption{Gluing Results.}
\label{table:gluing_results}
\end{table}
\printbibliography
\end{document}